\documentclass[11pt]{article}
\usepackage{mathrsfs}
\usepackage{hyperref}
\usepackage{amsmath,amsthm,amssymb,cite}
\usepackage{diagbox}
\usepackage{epic,eepic}
\usepackage{graphicx}
\usepackage{soul,color}
\usepackage{cases}
\usepackage{verbatim}
\usepackage{enumerate}
\usepackage{multirow}
\usepackage{indentfirst}
\usepackage{algorithm}
\usepackage{algcompatible}
\usepackage{tikz}
\usetikzlibrary{shapes,arrows,chains}
\textwidth 15cm
\textheight 22cm
\topmargin -0.8cm
\hoffset -1.1cm
\voffset -0.5cm
\makeatletter

\newcommand{\Rmnum}[1]{\expandafter\@slowromancap\romannumeral #1@}
\makeatother
\newtheorem{theorem}{Theorem}[section]
\newtheorem{definition}{Definition}[section]

\newtheorem{corollary}{Corollary}[section]
\newtheorem{lemma}{Lemma}[section]

\newcommand{\algrule}[1][.2pt]{\par\vskip.5\baselineskip\hrule height #1\par\vskip.5\baselineskip}
\allowdisplaybreaks[4]

\begin{document}
\title{Randomized Complete Pivoting for Solving \\ Symmetric Indefinite Linear Systems\thanks{Supported by CSC (grant 201606310121).}}
\author{Yuehua Feng\thanks{School of Mathematical Science, Xiamen University, China. E-mail: {\tt fyh1001@hotmail.com}.}
\and Jianwei Xiao\thanks{Department of Mathematics, University of California, Berkeley. E-mail: {\tt jwxiao@berkeley.edu}.}
\and Ming Gu\thanks{Department of Mathematics, University of California, Berkeley. E-mail: {\tt mgu@math.berkeley.edu}.}
}
%
\maketitle

\begin{abstract}

The Bunch-Kaufman algorithm and Aasen's algorithm are two of the most widely used methods for solving symmetric indefinite linear systems, yet they both are known to suffer from occasional numerical instability due to potentially exponential element growth or unbounded entries in the matrix factorization. In this work, we develop a randomized complete pivoting (RCP) algorithm for solving symmetric indefinite linear systems. RCP is comparable to the Bunch-Kaufman algorithm and Aasen's algorithm in computational efficiency, yet enjoys theoretical element growth and bounded entries in the factorization comparable to that of complete-pivoting, up to a theoretical failure probability that exponentially decays with an oversampling parameter. Our finite precision analysis shows that RCP is as numerically stable as Gaussian elimination with complete pivoting, and RCP has been observed to be numerically stable in our extensive numerical experiments.
\end{abstract}

\medskip
{\small
{\bf Keywords}: symmetric indefinite matrix, diagonal pivoting, randomized matrix algorithms, block $LDL^T$ factorization

\medskip
{\bf AMS subject classifications. 15A06, 15A23, 65F05}
}
\section{Introduction}

A symmetric matrix $A \in \mathbb{R}^{n \times n}$ is said to be indefinite if it has both positive and negative eigenvalues. Its corresponding linear system $Ax = b$ is called a symmetric indefinite linear system. Such symmetric indefinite linear systems appear in various problems coming from acoustics, electromagnetism, and physics of structures \cite{nedelec2001acoustic}. Symmetric indefinite linear systems are also widely solved in linear least squares problems \cite{bjorck1996numerical} and saddle point problems \cite{benzi2005numerical}.

To effectively solve symmetric indefinite linear systems when $A$ is a dense matrix, there are four well-known algorithms: Bunch-Parlett algorithm \cite{bunch1971direct}, Bunch-Kaufman algorithm \cite{bunch1977some}, bounded Bunch-Kaufman algorithm \cite{ashcraft1998accurate}, and Aasen's algorithm \cite{aasen1971reduction}. Duff and Reid's multifrontal algorithm \cite{duff1983multifrontal} and Liu's sparse threshold algorithm \cite{liu1987partial} can be used to solve sparse symmetric indefinite linear systems. All but Aasen's algorithm use diagonal pivoting to factorize $A$ into block $LDL^T$, where $L$ is unit lower triangular and $D$ is symmetric block diagonal with each block of order $1$ or $2$. Aasen's algorithm factorizes $A$ into $LTL^T$, where $T$ is a symmetric tridiagonal matrix.

The block $LDL^T$ factorization is a generalization of the Cholesky factorization, which requires the input matrix $A$ to be positive semi-definite. While the Cholesky factorization is numerically stable with or without diagonal pivoting \cite{golub2012matrix}, block $LDL^T$ factorization with partial pivoting can have numerical instability issues \cite{ashcraft1998accurate,higham1997stability}. There are a number of strategies to choose permutation matrices for numerical stability. Bunch and Parlett \cite{bunch1971direct} proposed a complete pivoting strategy, which requires searching the whole Schur complement at each stage of the block $LDL^T$ factorization and therefore requires up to $O(n^3)$ comparisons. Bunch \cite{bunch1971analysis} proved that Bunch-Parlett algorithm satisfies a backward error bound almost as good as that for Gaussian elimination with complete pivoting (GECP), and its element growth factor is within a factor $3.07(n-1)^{0.446}$ of Wilkinson's element growth factor bound \cite{wilkinson1961error} for GECP. To reduce the number of comparisons, Bunch and Kaufman devised a partial pivoting strategy, which searches at most two columns at each stage and the number of comparisons is reduced from $O(n^3)$ to $O(n^2)$. However, 
the multipliers can't be controlled with Bunch-Kaufman partial pivoting strategy. To overcome this instability problem,  Ashcraft, Grimes, and Lewis \cite{ashcraft1998accurate} proposed a bounded Bunch-Kaufman algorithm, where multipliers can be bounded near one, at a potentially high cost in comparisons. In the worst-case scenario, bounded Bunch-Kaufman is no better than the Bunch-Parlett algorithm, while in the best-case scenario bounded Bunch-Kaufman costs no more than the Bunch-Kaufman algorithm. LAPACK \cite{anderson1999lapack} routines SYSV and SYSVX, and LINPACK \cite{dongarra1979linpack} routines SIFA and SISL are all based on the Bunch-Kaufman algorithm, whereas LAPACK routines SYSV$\_$rook and SYSV$\_$aa are based on bounded Bunch-Kaufman algorithm and Aasen's algorithm respectively.

Randomization techniques are gaining increasing popularity in numerical linear algebra. Application of randomization in numerical linear algebra includes computing condition estimates \cite{arioli2007partial,kenney1998statistical}, solving linear systems using Monte Carlo methods \cite{dimov2008monte} and Random Butterfly Transformation \cite{baboulin2014efficient, baboulin2013accelerating,baboulin2015dense,parker1995random}. Random projection is one important tool among randomization techniques. Random projections \cite{arriaga1999algorithmic, vempala2005random}, a technique of projecting a set of points from a higher-dimensional space to a randomly chosen lower-dimensional space, approximately preserve pairwise distances \cite{johnson1984extensions} with high probability. Random projections have proven themselves very effective tool in numerical linear algebra \cite{gu2015subspace,halko2011finding, mahoney2011randomized,martinsson2011randomized,   melgaard2015gaussian,woolfe2008fast,xiao2016spectrum}. 

In this paper, we employ random projection techniques to develop a novel pivoting strategy called randomized complete pivoting (RCP). Comparing with the Bunch-Kaufman algorithm and Aasen's algorithm, the RCP algorithm requires a relatively small random projection related overhead that diminishes with matrix dimensions. However, we show that the element growth factor upper bound for the RCP algorithm is comparable to those for GECP and the Bunch-Parlett algorithm. Additionally, we performed an error analysis on the RCP algorithm to demonstrate its numerical stability. 

This paper is organized as follows. Section \ref{Sec:Preliminaries and Background} introduces the preliminaries and background. Section \ref{Sec:RCP} presents the RCP algorithm, its implementations and a floating point operations count analysis. Section \ref{Sec:Analysis of RCP} includes analyses of the RCP algorithm, including reliability analysis of randomized column pivots, analysis on element growth and column growth, and finite precision analysis. In Section \ref{Sec:Numerical experiment}, we report results from our extensive numerical experiments of the RCP algorithm, which demonstrate its effectiveness and reliability as claimed. Finally we draw some conclusions in Section \ref{Sec:Conclusions}. 

\section{Preliminaries and Background} \label{Sec:Preliminaries and Background}
In this section, we introduce some basic notation as well as the Bunch-Kaufman algorithm. 
\subsection{Preliminaries}
Let $\| \cdot \|_p$ and $\| \cdot \|_q$ be two vector norms, their corresponding subordinate matrix norm of a given matrix $A  = \left(a_{i,j}\right) \in \mathbb{R}^{m \times n}$ is  \cite{higham2002accuracy,horn2012matrix}
$$
\| A \|_{p, q} \stackrel{def}{=} {\bf sup}_{0 \neq x \in \mathbb{R}^n} \frac{ \| Ax \|_q }{ \| x \|_p } = {\bf max}_{ \| x \|_p = 1} \| Ax \|_q.
$$

The following cases \cite{higham2002accuracy} will be of special use to us:
\begin{align*}
    &\left\| A \right\|_{1, 2}  = {\bf max}_{1 \le i \le n} \left\| A(:,i) \right\|_2, \quad \left\| A \right\|_{1, \infty} = {\bf max}_{i, j} | a_{i, j } |, \\
    &\left\|AB\right\|_{1,2} \le \left\|A\right\|_2 \left\|B\right\|_{1,2}, \quad 
\left\|AB\right\|_{1,2} \le \left\|A\right\|_{1,2} \left\|B\right\|_1,
\end{align*}
where $A \in \mathbb{R}^{m \times k}$ and $B \in \mathbb{R}^{k \times n}$.

As in the original work of Wilkinson on GECP \cite{wilkinson1961error}, Hadamard's inequality \cite{horn2012matrix} will prove useful in our element growth analysis as well. 
\begin{theorem}\label{Th:Hadamard Inequality} Let $A \in \mathbb{R}^{n \times n}$, then 
$$
\left| \det(A) \right| \le \prod_{j = 1}^n \left\| A(:, j) \right\|_2.
$$
\end{theorem}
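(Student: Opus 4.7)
The plan is to prove Hadamard's inequality via the QR factorization, which converts the determinant of $A$ into a product of the diagonal entries of a triangular factor whose columns have the same Euclidean norms as the columns of $A$. First I would dispense with the singular case: if $\det(A) = 0$, then the left-hand side is zero while the right-hand side is a product of nonnegative real numbers, so the inequality is immediate. Thus I may assume $A$ is nonsingular.

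Next I would invoke the existence of a QR factorization $A = QR$, where $Q \in \mathbb{R}^{n \times n}$ is orthogonal and $R \in \mathbb{R}^{n \times n}$ is upper triangular with nonzero diagonal entries. Since orthogonal matrices satisfy $|\det(Q)| = 1$ and the determinant of a triangular matrix is the product of its diagonal entries,
$$ |\det(A)| \;=\; |\det(Q)|\,|\det(R)| \;=\; \prod_{j=1}^{n} |r_{jj}|. $$
The key observation is that orthogonal transformations preserve $2$-norms column-by-column: $\|A(:,j)\|_2 = \|Q R(:,j)\|_2 = \|R(:,j)\|_2$. Because $R$ is upper triangular, the $j$th column of $R$ has the form $(r_{1j},\ldots,r_{jj},0,\ldots,0)^T$, so $\|R(:,j)\|_2^2 = \sum_{i=1}^{j} r_{ij}^2 \ge r_{jj}^2$. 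Taking square roots yields $|r_{jj}| \le \|A(:,j)\|_2$ for each $j$, and multiplying these inequalities gives
$$ |\det(A)| \;=\; \prod_{j=1}^{n} |r_{jj}| \;\le\; \prod_{j=1}^{n} \|A(:,j)\|_2, $$
which is the claimed bound.

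There is essentially no hard step here, since both ingredients (existence of a QR factorization and invariance of the $2$-norm under orthogonal transformations) are classical. The only point demanding a little care is the singular case, which I handle separately at the outset. An alternative route, which I mention only for completeness, would be to work with the Gram matrix $G = A^T A$: this is positive semidefinite with diagonal entries $g_{jj} = \|A(:,j)\|_2^2$, and one could deduce $\det(A)^2 = \det(G) \le \prod_j g_{jj}$ from the positive semidefinite version of Hadamard's inequality. The QR approach is cleaner because it avoids reducing the problem to a second Hadamard-type statement that itself requires proof.
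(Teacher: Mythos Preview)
Your proof is correct. The QR-based argument is a standard and clean route to Hadamard's inequality: the singular case is trivial, and in the nonsingular case the factorization $A = QR$ gives $|\det(A)| = \prod_j |r_{jj}|$ while orthogonality of $Q$ yields $|r_{jj}| \le \|R(:,j)\|_2 = \|A(:,j)\|_2$.

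There is nothing to compare against, however, because the paper does not supply its own proof of this theorem. Hadamard's inequality is quoted as a classical tool with a citation to Horn and Johnson, \emph{Matrix Analysis}, and is then invoked without proof in the element-growth analysis (Theorem~\ref{Th:Column Growth Factor for RCP}). So your proposal fills in a standard background result that the paper simply takes as known.
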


Lemma \ref{Le:JL lemma} is a much weakened version of the original Johnson-Lindenstrauss Lemma \cite{johnson1984extensions, vempala2005random}. It has proven itself indispensable in the study of randomized numerical linear algebra \cite{melgaard2015gaussian, xiao2016spectrum}.
\begin{lemma}\label{Le:JL lemma} Let $x \in \mathbb{R}^n$ and $ 0 < \epsilon < 1$. Assume that entries in $\Omega \in \mathbb{R}^{p \times n}$ are independent and identically distributed (i.i.d.) random variables drawn from $\mathcal{N} (0,1)$, then 
\begin{equation*}
\mathbb{P} \left( \sqrt{1 - \epsilon}\, \| x \|_2 \le \left\| \frac{1}{\sqrt{p}} \Omega x \right\|_2 \le \sqrt{1 + \epsilon} \, \| x \|_2 \right) \ge 1 - 2 \exp{ \left( - \frac{(\epsilon^2 - \epsilon^3) p}{4} \right) }.
\end{equation*}
\end{lemma}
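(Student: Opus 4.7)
The plan is to reduce the statement to a chi-squared concentration inequality and then apply the standard Chernoff/moment-generating-function argument to each tail. By homogeneity of both sides in $x$, I would first assume without loss of generality that $\|x\|_2 = 1$. Since each row $\Omega_{i,:}$ of $\Omega$ is an independent $\mathcal{N}(0, I_n)$ vector, the inner product $(\Omega x)_i = \Omega_{i,:} x$ is distributed as $\mathcal{N}(0, \|x\|_2^2) = \mathcal{N}(0,1)$, and these $p$ coordinates are mutually independent. Consequently
$$Y \;:=\; \left\|\Omega x\right\|_2^2 \;=\; \sum_{i=1}^p Z_i^2, \qquad Z_i \text{ i.i.d.\ } \mathcal{N}(0,1),$$
so $Y \sim \chi^2_p$. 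The event in the lemma is exactly $\{(1-\epsilon)p \le Y \le (1+\epsilon)p\}$, and it suffices to bound the complementary upper- and lower-tail events separately and combine them by a union bound, which accounts for the factor of $2$ in the final probability.

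Next, I would handle the upper tail $\mathbb{P}(Y \ge (1+\epsilon)p)$ by the Chernoff method using the well-known Gaussian-squared MGF $\mathbb{E}\bigl[e^{t Z_i^2}\bigr] = (1-2t)^{-1/2}$ for $t < 1/2$. This yields
$$\mathbb{P}(Y \ge (1+\epsilon)p) \;\le\; (1-2t)^{-p/2}\, e^{-tp(1+\epsilon)},$$
which is minimized at $t = \epsilon/(2(1+\epsilon))$, giving an exponent of the Bregman form $-\tfrac{p}{2}\bigl(\epsilon - \ln(1+\epsilon)\bigr)$. The lower tail $\mathbb{P}(Y \le (1-\epsilon)p)$ is handled symmetrically by using $\mathbb{E}\bigl[e^{-s Z_i^2}\bigr] = (1+2s)^{-1/2}$ with $s > 0$, and choosing $s = \epsilon/(2(1-\epsilon))$, producing an exponent $-\tfrac{p}{2}\bigl(-\epsilon - \ln(1-\epsilon)\bigr)$ of the same flavor.

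The final step is to relax both Bregman-type exponents to the single, clean expression $-(\epsilon^2 - \epsilon^3)p/4$ stated in the lemma. This amounts to verifying the elementary scalar inequalities
$$\epsilon - \ln(1+\epsilon) \;\ge\; \tfrac{1}{2}(\epsilon^2 - \epsilon^3) \quad\text{and}\quad -\epsilon - \ln(1-\epsilon) \;\ge\; \tfrac{1}{2}(\epsilon^2 - \epsilon^3)$$
on $(0,1)$, each of which follows from the Taylor series $\ln(1\pm\epsilon) = \pm\epsilon - \tfrac{\epsilon^2}{2} \pm \tfrac{\epsilon^3}{3} - \cdots$ together with straightforward comparison of the leading remainders. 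Both tails are then bounded by $\exp(-(\epsilon^2-\epsilon^3)p/4)$, and the union bound concludes the proof.

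I expect the main obstacle to be the last, bookkeeping step: the natural quantities that drop out of the Chernoff optimization are the Bregman divergences, and one must carefully compare them to the explicit polynomial $\tfrac{1}{2}(\epsilon^2-\epsilon^3)$ on the whole interval $(0,1)$ (not just in the limit $\epsilon \to 0$) in order to pin down the exact constants $1/4$ and $\epsilon^2-\epsilon^3$ appearing in the statement. Everything else is a direct appeal to independence of Gaussians and the standard MGF of $\chi^2_p$.
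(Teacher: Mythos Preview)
Your argument is correct and is precisely the standard Chernoff/MGF proof of the Gaussian Johnson--Lindenstrauss inequality: reduce to $\chi^2_p$ concentration, optimize the exponential Markov bound on each tail, and then relax the resulting exponents $\tfrac{p}{2}(\epsilon-\ln(1+\epsilon))$ and $\tfrac{p}{2}(-\epsilon-\ln(1-\epsilon))$ to the common lower bound $\tfrac{p}{4}(\epsilon^2-\epsilon^3)$. The two scalar inequalities you flag as the bookkeeping step are indeed valid on all of $(0,1)$ (for instance, $f(\epsilon)=\epsilon-\ln(1+\epsilon)-\tfrac12(\epsilon^2-\epsilon^3)$ satisfies $f(0)=0$ and $f'(\epsilon)=\epsilon^2(\tfrac12+\tfrac32\epsilon)/(1+\epsilon)\ge 0$, while the lower-tail inequality is immediate since $-\epsilon-\ln(1-\epsilon)\ge \epsilon^2/2$).

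As for comparison with the paper: the paper does not supply its own proof of this lemma. It simply quotes the statement as a weakened form of the Johnson--Lindenstrauss lemma and cites \cite{johnson1984extensions,vempala2005random} (and, for its use in randomized linear algebra, \cite{melgaard2015gaussian,xiao2016spectrum}). So there is no alternative argument in the paper to contrast with yours; your proposal fills in exactly the standard proof that those references contain.
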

Lemma \ref{Le:JL lemma} motivates the following definition \cite{melgaard2015gaussian}.
\begin{definition} \label{De:JL condition}
A given vector $x \in \mathbb{R}^n$ satisfies the $\epsilon$-JL condition under random mapping $\Omega \in \mathbb{R}^{p \times n}$ if
$$
\sqrt{1 - \epsilon} \,\| x \|_2 \le \left\| \frac{1}{\sqrt{p}} \Omega x \right\|_2 \le \sqrt{1 + \epsilon} \, \| x \|_2.
$$
\end{definition}
Intuitively, a vector $x \in \mathbb{R}^n$ satisfies the $\epsilon$-JL condition when its length is approximately preserved under a random projection. For any given failure probability $\delta > 0$, Lemma \ref{Le:JL lemma} asserts that vector $x \in \mathbb{R}^n$ satisfies the $\epsilon$-JL condition as long as 
\[ p \ge \frac{4}{\epsilon^2(1 - \epsilon)} \, {\bf log} \frac{2}{\delta}, \]
regardless of $n$. This log-dependence of $p$ on $\delta$ is at the heart of high reliability of randomized algorithms. 

The law of total probability \cite{zwillinger1999crc} is a classical tool to remove the conditions in conditional probability estimates. 
\begin{theorem}\label{Th:Law of Total Probability}
(Law of Total Probability \cite{zwillinger1999crc}). Assume that events $E_1, \cdots, E_m$ are pairwise disjoint, and that their probabilities sum up to one, then for any event $B$, 
$$
\mathbb{P}(B) = \sum_{i = 1}^{m} \mathbb{P} (B | E_i) \,  \mathbb{P} (E_i).
$$
\end{theorem}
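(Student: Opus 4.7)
The plan is to reduce the identity to two standard facts: finite additivity of $\mathbb{P}$ on disjoint events, and the definition of conditional probability $\mathbb{P}(B\mid E_i) = \mathbb{P}(B\cap E_i)/\mathbb{P}(E_i)$. I first observe that, since $E_1,\dots,E_m$ are pairwise disjoint and $\sum_{i=1}^m \mathbb{P}(E_i)=1$, the set $E \stackrel{def}{=} \bigcup_{i=1}^m E_i$ satisfies $\mathbb{P}(E)=1$ (by finite additivity), so its complement is a null set. This lets me freely replace $B$ by $B\cap E$ inside any probability, at no cost.

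Next, I would write $B\cap E = \bigcup_{i=1}^m (B\cap E_i)$, and note that the summands are still pairwise disjoint because the $E_i$ are. Finite additivity then yields
\[
\mathbb{P}(B) \;=\; \mathbb{P}(B\cap E) \;=\; \sum_{i=1}^m \mathbb{P}(B\cap E_i).
\]
Invoking the definition of conditional probability in the form $\mathbb{P}(B\cap E_i)=\mathbb{P}(B\mid E_i)\,\mathbb{P}(E_i)$ on each summand gives exactly the claimed identity.

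The only mildly delicate point is handling indices $i$ for which $\mathbb{P}(E_i)=0$, since then $\mathbb{P}(B\mid E_i)$ is not defined by the usual ratio. I would dispose of this by adopting the standard convention $\mathbb{P}(B\mid E_i)\,\mathbb{P}(E_i)=0$ whenever $\mathbb{P}(E_i)=0$, which is consistent with $\mathbb{P}(B\cap E_i)\le \mathbb{P}(E_i)=0$. Since the statement is classical and is used later merely as a bookkeeping device to combine conditional bounds from the JL-type estimate, I do not expect any real obstacle in the argument; the write-up is essentially two lines plus a remark on the null-probability convention.
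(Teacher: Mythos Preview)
Your proof is correct and is the standard textbook argument. Note, however, that the paper does not actually prove this statement: Theorem~\ref{Th:Law of Total Probability} is stated as a classical fact with a citation to \cite{zwillinger1999crc} and no proof is given, so there is nothing in the paper to compare against. Your two-line derivation via $\mathbb{P}(B)=\mathbb{P}(B\cap E)=\sum_i \mathbb{P}(B\cap E_i)=\sum_i \mathbb{P}(B\mid E_i)\,\mathbb{P}(E_i)$, together with the remark on the convention for $\mathbb{P}(E_i)=0$, is exactly what one would expect and is entirely adequate.
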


\subsection{The Block \texorpdfstring{$LDL^T$}{Lg} Factorization}
Let $A \in \mathbb{R}^{n \times n}$ be a symmetric indefinite matrix, the block $LDL^T$ factorization of $A$ takes the form 
\begin{eqnarray*}
\Pi A \Pi^T & = & LDL^T \\
& = & \begin{pmatrix}
I &  & & \\
L_{21} & I & & \\
\vdots & \vdots  &\ddots & \\
L_{N1} & L_{N2} & \cdots & I 
\end{pmatrix} \begin{pmatrix}
D_1  &  &  &  \\
&  D_2 &  &  \\
&  &  \ddots &  \\
&   &   &   D_N
\end{pmatrix} \begin{pmatrix}
I & L_{21}^T  & \cdots &  L_{N1}^T \\
& I  &  \cdots &  L_{N2}^T  \\
&   &  \ddots  & \vdots \\
&   &   & I
\end{pmatrix} 
,
\end{eqnarray*}
where $\Pi$ is a permutation matrix, $L$ is a unit lower triangular matrix and $D$ is a symmetric block diagonal matrix with each block $D_i~(1 \le i \le N)$ being $1 \times 1$ or $2 \times 2$.

Let $A^{(k)}$ be the Schur complement of order $n-k+1$ in the block $LDL^T$ factorization process. $A^{(1)}$ is just the input matrix $A$. If $A^{(k)} \in \mathbb{R}^{(n-k+1) \times (n-k+1)}$ is nonzero, there exists an integer $s = 1$ or $2$ and a permutation matrix $\Pi_k$ so that
$$
\Pi_k A^{(k)} \Pi_k^T =  \begin{pmatrix}
A_{11}^{(k)} & {A_{21}^{(k)}}^T \\
A_{21}^{(k)} & A_{22}^{(k)}
\end{pmatrix}, 
$$
with a nonsingular $A_{11}^{(k)} = D_k \in \mathbb{R}^{s \times s}$. This allows one to factorize
\begin{eqnarray*}
&&\Pi_k A^{(k)} \Pi_k^T \\
&=& \left(
\begin{array}{cc}
I_s & O \\
A_{21}^{(k)}{A_{11}^{(k)}}^{-1} & I_{n-k+1-s} 
\end{array}
\right) \left(
\begin{array}{cc}
A_{11}^{(k)} & O \\
O & A^{(k + s)} 
\end{array}
\right) \left(
\begin{array}{cc}
I_s & {A_{11}^{(k)}}^{-1}{A_{21}^{(k)}}^T \\
O & I_{n-k+1-s} 
\end{array}
\right),
\end{eqnarray*}
where the Schur complement is $A^{(k + s)} \stackrel{def}{=} A_{22}^{(k)} - A_{21}^{(k)} \left({A_{11}^{(k)}}\right)^{-1} {A_{21}^{(k)}}^T$. $I_s, I_{n-k+1-s}$ are identity matrices of orders $s$ and $n - k + 1 - s$, respectively. Any element of $A_{21}^{(k)}\left({A_{11}^{(k)}}\right)^{-1}$ will be called a multiplier. We repeat this process on the Schur complement $A^{(k + s)}$, and eventually we will obtain the block $LDL^T$ factorization of $A$, where $\Pi$ is the product of all permutation matrices $\Pi_k$. The computational efficiency and numerical stability of this factorization is highly dependent on how $\Pi_k$ is chosen for each $k$. 

The classical measure of numerical stability in an $LDL^T$ factorization is the {\em element growth factor}. 
\begin{definition}\label{De:definition of elment growth factor}
(Element Growth Factor). Let $A \in \mathbb{R}^{n \times n}$,
\[ {\displaystyle 
\rho_{\textrm{elem}}(A) \stackrel{def}{=} \frac{ {\bf max}_k \left\| A^{(k)} \right\|_{1, \infty} }{ \| A \|_{1, \infty } } = \frac{ {\bf max}_{i, j, k} \left| a^{(k)}_{i, j} \right| }{ {\bf max}_{i, j} | a_{i, j } | } \ge 1.} \]
\end{definition}
The $LDL^T$ factorization is typically considered unstable if $\rho_{\textrm{elem}}(A)$ is very large. A concept closely related to $\rho_{\textrm{elem}}(A)$ is the {\em column norm growth factor}. 
\begin{definition}\label{De:definition of column norm growth factor}
(Column Norm Growth Factor). Let $A \in \mathbb{R}^{n \times n}$,
\[ {\displaystyle 
\rho_{\textrm{col}}(A) \stackrel{def}{=} \frac{ {\bf max}_k \left\| A^{(k)} \right\|_{1, 2} }{ \| A \|_{1, 2} } = \frac{ {\bf max}_{j, k} \left\| A^{(k)}(:, j) \right\|_2 }{{\bf max}_{j} \| A(:, j) \|_2}.} \]
\end{definition}
These growth factors differ by at most a factor of $~\sqrt{n}$ according to Lemma \ref{Le:Inequality between element GF and column norm GF}, whose proof we omit. 
\begin{lemma}\label{Le:Inequality between element GF and column norm GF}
Let $A \in \mathbb{R}^{n \times n}$,
$$
\frac{1}{\sqrt{n}} \, \rho_{\textrm{elem}}(A) \le \rho_{col}(A) \le \sqrt{n} \, \rho_{\textrm{elem}}(A).
$$
\end{lemma}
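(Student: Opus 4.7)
The plan is to reduce the inequality to the elementary norm comparison $\|v\|_\infty \le \|v\|_2 \le \sqrt{m}\,\|v\|_\infty$ for any $v \in \mathbb{R}^m$, applied column-by-column to $A$ and to each Schur complement $A^{(k)}$.

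First, I would observe that for any matrix $M \in \mathbb{R}^{m \times m}$, applying the above norm equivalence to every column $M(:,j)$ and then taking the maximum over $j$ yields
\[
\|M\|_{1,\infty} \;\le\; \|M\|_{1,2} \;\le\; \sqrt{m}\,\|M\|_{1,\infty}.
\]
Since each Schur complement $A^{(k)} \in \mathbb{R}^{(n-k+1)\times(n-k+1)}$ has size at most $n$, we get
\[
\|A^{(k)}\|_{1,\infty} \;\le\; \|A^{(k)}\|_{1,2} \;\le\; \sqrt{n}\,\|A^{(k)}\|_{1,\infty},
\]
and the same sandwich holds for $A = A^{(1)}$ itself.

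Next I would plug these bounds into the definition of $\rho_{\textrm{col}}(A)$. For the upper bound, use $\|A^{(k)}\|_{1,2} \le \sqrt{n}\,\|A^{(k)}\|_{1,\infty}$ in the numerator and $\|A\|_{1,2} \ge \|A\|_{1,\infty}$ in the denominator:
\[
\rho_{\textrm{col}}(A) \;=\; \frac{\max_k \|A^{(k)}\|_{1,2}}{\|A\|_{1,2}} \;\le\; \frac{\sqrt{n}\,\max_k \|A^{(k)}\|_{1,\infty}}{\|A\|_{1,\infty}} \;=\; \sqrt{n}\,\rho_{\textrm{elem}}(A).
\]
For the lower bound, symmetrically use $\|A^{(k)}\|_{1,2} \ge \|A^{(k)}\|_{1,\infty}$ in the numerator and $\|A\|_{1,2} \le \sqrt{n}\,\|A\|_{1,\infty}$ in the denominator to get $\rho_{\textrm{col}}(A) \ge \rho_{\textrm{elem}}(A)/\sqrt{n}$.

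There is essentially no obstacle here; the lemma is just the column-wise version of the standard $\ell_2/\ell_\infty$ equivalence, which is why the authors state that the proof may be omitted. The only small care needed is tracking that the $\sqrt{m}$ factor for Schur complements is bounded by $\sqrt{n}$ uniformly in $k$, so the same constant $\sqrt{n}$ appears on both sides of the final inequality.
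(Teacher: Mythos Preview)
Your argument is correct and is exactly the elementary $\ell_2/\ell_\infty$ norm-equivalence reasoning one would expect; the paper in fact omits the proof entirely, so there is nothing to compare against beyond noting that your approach is the natural one the authors presumably had in mind.
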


\subsection{The Bunch-Kaufman Algorithm}\label{Sec:BKA}
The Bunch-Kaufman Algorithm is an algorithm for computing the $LDL^T$ factorization with a computationally efficient pivoting strategy, to be called Bunch-Kaufman partial pivoting (BKPP) strategy, depicted in Figure \ref{alg:bkpp}. 
To simplify the notation in Figure \ref{alg:bkpp}, we have suppressed the superscript in entries of $A^{(k)}$ so
$A^{(k)} = (a_{i j})$ for $k \le i,~j \le n$.
Let\footnote{In case multiple indexes of $r$ give the same $\lambda$ value, we choose $r$ to be the smallest index.}
\[ \lambda  = \left|a_{r, k}\right|, \quad \mbox{where} \quad r \stackrel{def}{=} {\bf argmax}_{k +1 \le i \le n} \left|a_{i, k}\right|.\]
Thus $a_{r k}$ is the off-diagonal entry in the first column of $A^{(k)}$ with the largest magnitude. Further define 
\[\sigma = {\bf max}_{k \le i \le n, \; i \not= r} \left|a_{i, r}\right|. \]
It follows that $\sigma \ge \lambda$. The schematic matrix in Figure \ref{Fig:Bunch-Kaufman partial pivot entries} shows the pivot entries and their indexes. 
\begin{figure}[htbp]
\centering
\fbox{\parbox{0.85\linewidth}{
\begin{algorithmic} 
\STATE Choose $0 < \alpha  < 1$;
\IF {$\lambda = 0$}
\STATE \textbf{(0)}: there is nothing to do on this stage of the elimination;
\ELSIF {$| a_{kk} | \ge \alpha \lambda$}
\STATE \textbf{(1)}: use $a_{kk}$ as a $1 \times 1$ pivot;
\ELSIF {$| a_{kk} | \sigma \ge \alpha \lambda^2$}
\STATE \textbf{(2)}: use $a_{kk}$ as a $1 \times 1$ pivot;
\ELSIF {$| a_{rr} | \ge \alpha \sigma$}
\STATE \textbf{(3)}: use $a_{rr}$ as a $1 \times 1$ pivot (swap rows and columns $k$ and $r$ of $A$);
\ELSE
\STATE \textbf{(4)}: use $\left[
\begin{array}{cc}
a_{kk} & a_{rk} \\
a_{rk} & a_{rr}
\end{array}
\right]$ 
as a $2 \times 2$ pivot (swap rows and columns $k + 1$ and $r$ of $A$);
\ENDIF
\end{algorithmic}
}}
\caption{Bunch-Kaufman partial pivoting strategy}\label{alg:bkpp}
\end{figure}
\begin{figure}[htbp]
$$
\left[ 
\begin{array}{cccccc}
a_{kk} & \cdots & a_{rk} & \cdots & \cdots & \cdots \\
\vdots & \ddots & \vdots &  &  &  \\
a_{rk} & \cdots & a_{rr} & \cdots & \pm \sigma & \cdots \\
\vdots & \ddots & \vdots & \ddots & &  \\
\vdots & \cdots & \pm \sigma & & \ddots &  \\
\vdots & \cdots & \vdots & &  & \ddots
\end{array}
\right]
$$
\caption{Bunch-Kaufman partial pivot entries} \label{Fig:Bunch-Kaufman partial pivot entries}
\end{figure}

The BKPP strategy is justified in \cite{bunch1977some}. However, we notice that the BKPP strategy strives to avoid ill-conditioning in the $2\times 2$ pivots, but does not completely succeed in doing so, leading to potential numerical instability beyond large element growth \cite{higham1997stability}. The parameter $\alpha$ is usually chosen as $\alpha = (1 + \sqrt{17}) / 8$, which makes the bound for element growth over two consecutive $1 \times 1$ pivots equal the bound for element growth over one $2 \times 2$ pivot \cite{bunch1977some}.

\section{Randomized Complete Pivoting Algorithm}\label{Sec:RCP}
Instead of finding the larg-est magnitude entry in the whole matrix $A$, as required in GECP, the RCP algorithm chooses the column with the maximum $2$-norm on a random projection of $A$, and then applies a simplified BKPP strategy 
on the chosen column. More precisely, given a symmetric matrix $A \in \mathbb{R}^{n\times n}$ and a random matrix $\Omega \in \mathbb{R}^{p \times n}$ whose entries are sampled from $\mathcal{N}(0,1)$ independently, we compute a random projection of $A$,
\begin{equation*}
B = \Omega \, A \in \mathbb{R}^{p \times n},
\end{equation*}
which has a much smaller row dimension than $A$ for $p \ll n$, the only case that is of interest to us. Instead of finding a pivot on $A$, we choose the column pivot on $B$ by finding its column with the largest column $2$-norm. We denote the permutation of swapping the first column of $B$ and the column of $B$ with the largest $2$-norm by $\widetilde{\Pi}$, then $B \, \widetilde{\Pi}$ satisfies
\begin{equation}\label{Eq:Apply Pi on A B Omega}
B \, \widetilde{\Pi} = \left(\Omega \, \widetilde{\Pi}\right)\left(\widetilde{\Pi}^T\,  A\,  \widetilde{\Pi}\right).
\end{equation}
After applying this permutation $\widetilde{\Pi}$ to the columns and rows of $A$, we perform a symmetric factorization of $\widetilde{\Pi}^T A\, \widetilde{\Pi}$, i.e.,
\begin{equation}\label{Eq:Perform LDL^T on Pi A Pi^T}
\widetilde{\Pi}^T \, A \, \widetilde{\Pi} = \widehat{\Pi}^T L D L^T \widehat{\Pi}.
\end{equation}

Figure \ref{alg:idea} is a graphical illustration of the main idea behind the RCP algorithm. The strategy for choosing $\widehat{\Pi}$ is described in Figure \ref{alg:SBKP}, and we name it simplified Bunch-Kaufman pivoting (SBKP) strategy , where $\widehat{\Pi}$ is called an SBKP permutation. 

Unlike BKPP, the column permutation $\widetilde{\Pi}$ ensures with high probability that $\lambda = O\left(\left\|A\right\|_{1,\infty}\right)$. When SBKP chooses a $2\times 2$ pivot, it is under the conditions that
\[|a_{k,k}| < \alpha \, \lambda, \quad |a_{r,r}| < \alpha \, \lambda, \quad \mbox{with}\quad \lambda = |a_{r,k}|. \]
Thus the $2\times 2$ pivot $\left[
\begin{array}{cc}
a_{kk} & a_{rk} \\
a_{rk} & a_{rr}
\end{array}
\right]$ 
as a $2 \times 2$ must be well-conditioned for $0 < \alpha < 1$. We will discuss the choice $\alpha = \sqrt{2} / 2$ in Subsection \ref{Sec:Analysis of growth factor of RCP}. 
\begin{figure}[htbp]
\centering
\begin{tikzpicture}
\draw (0, 0) rectangle (2, 2);
\node[above] at (1, 2) {$n$};
\node[left] at (0, 1) {$n$};
\node[below] (s2) at (1, 0) {$A$};
\node[below=0.25cm] at (s2) {\begin{tabular}{c} $LDL^T$ \\ factorization \\ \end{tabular}};
\draw (6, 0.75) rectangle (8, 1.25);
\node[right] at (8, 1) {$p$};
\node[above] at (7, 1.25) {$n$};
\node[below] (s1) at (7, 0.75) {$B = \Omega \, A$};
\draw[->, line width=1pt] (2, 1.15) -- node[above] {Random projection} (6, 1.15);
\draw[->, line width=1pt] (6, 0.85) --node [below] {\begin{tabular}{l} One pivot \\ \end{tabular}}  (2, 0.85);
\node[below=0.15cm] (s3) at (s1) {\begin{tabular}{c} maximum \\ column $2$-norm \\ \end{tabular}};
\end{tikzpicture}
\caption{the RCP algorithm takes a column pivot for $B$ as a column pivot for $A$ }\label{alg:idea}
\end{figure}
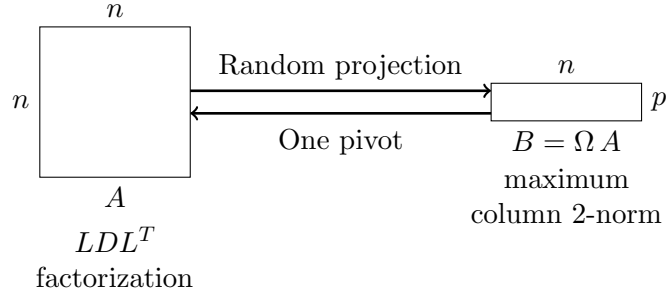 
\begin{figure}[htbp]
\centering
\fbox{\parbox{0.9\linewidth}{
\begin{algorithmic} 
\STATE $ \alpha = \frac{\sqrt{2}}{2} $;
\STATE $\lambda \stackrel{def}{=} |a_{rk}|, \quad \mbox{where} \quad r = {\bf argmax}_{k + 1 \le j \le n} \left| A^{(k)}(j, k) \right|$;
\IF {$\lambda = 0$}
\STATE \textbf{(0)}: there is nothing to do on this stage of the elimination ; 
\ELSIF {$| a_{kk} | \ge \alpha \lambda$}
\STATE \textbf{(1)}: use $a_{kk}$ as a $1 \times 1$ pivot ($s = 1$);
\ELSIF {$| a_{rr} | \ge \alpha \lambda$}
\STATE \textbf{(2)}: use $a_{rr}$ as a $1 \times 1$ pivot (swap rows and columns $k$ and $r$, $s = 1$);
\ELSE
\STATE \textbf{(3)}: use $\left[ 
\begin{array}{cc}
a_{kk} & a_{rk} \\
a_{rk} & a_{rr}
\end{array}
\right]$ 
as a $2 \times 2$ pivot (swap rows and columns $k + 1$ and $r$, $s = 2$); 
\ENDIF
\end{algorithmic}
}}
\caption{Simplified Bunch-Kaufman partial pivoting strategy}\label{alg:SBKP}
\end{figure}

Let $\Pi = \widetilde{\Pi} \, \widehat{\Pi}^T$, we re-write equation \eqref{Eq:Perform LDL^T on Pi A Pi^T} as 
\[ \Pi^T \, A \, \Pi =  L \, D \, L^T,  \]
\noindent where 
\[L = \bordermatrix{
& s & n-s \cr
s & I_s & \cr
n-s & L_{21} & I_{n-s}
},\quad 
~D = \bordermatrix{
& s & n-s \cr
s & A_{11} &  \cr
n-s &  & A^{(1 + s)}
}.\]

To repeat this elimination process on the Schur complement $A^{(1+s)}$, we need a random projection on $A^{(1+s)}$ as well. While this random projection can be directly computed, it turns out to be unnecessary. Below we develop an efficient update formula to more efficiently compute it from $B$, the random projection on $A$. 

To this end, partition $\Omega \, \Pi = \left(
\begin{array}{cc}
\widetilde{\Omega}_1 & \widetilde{\Omega}_2 
\end{array}
\right)$ and $B \, \Pi = \left(
\begin{array}{cc}
\widetilde{B}_1 & \widetilde{B}_2 
\end{array}
\right)$. By equation \eqref{Eq:Apply Pi on A B Omega} and equation \eqref{Eq:Perform LDL^T on Pi A Pi^T},
\begin{equation*} 
\left(
\begin{array}{cc}
\widetilde{B}_1 & \widetilde{B}_2 
\end{array}
\right) = \left(
\begin{array}{cc}
\widetilde{\Omega}_1 & \widetilde{\Omega}_2 
\end{array}
\right) \left(
\begin{array}{cc}
I_s &  \\
L_{21} & I_{n-s}
\end{array}
\right)  \left(
\begin{array}{cc}
A_{11} &  \\
& A^{(1 + s)} 
\end{array}
\right)  \left(
\begin{array}{cc}
I_s & L_{21}^T \\
& I_{n-s}
\end{array}
\right), 
\end{equation*}
implying 
\[\widetilde{B}_1 = \left(\widetilde{\Omega}_1 + \widetilde{\Omega}_2 \, L_{21}\right) A_{11}, \quad \mbox{and} \quad \widetilde{B}_2 = \left(\widetilde{\Omega}_1 + \widetilde{\Omega}_2 \, L_{21}\right) A_{11} \, L_{21}^T + \widetilde{\Omega}_2 \, A^{(1 + s)} . \]

By choosing $\widetilde{\Omega}_2$ as the random matrix for a random projection on $A^{(1+s)}$, we immediately have 
\begin{align}\label{Eq:Update matrix B}
B^{(1+s)} & \stackrel{def}{=} \widetilde{\Omega}_2 \, A^{(1+s)} = \widetilde{B}_2 - \left( \widetilde{\Omega}_1  + \widetilde{\Omega}_2 \, L_{21} \right)A_{11} \, L_{21}^T  \notag\\
& = \widetilde{B}_2 - \widetilde{B}_1 \, L_{21}^T.
\end{align}

Directly computing $B^{(1+s)}$ as a matrix-matrix product $\widetilde{\Omega}_2 \, A^{(1+s)}$ would cost $O\left(pn^2\right)$ operations, whereas the update formula \eqref{Eq:Update matrix B} costs only $O\left(pn\right)$ operations, a largely negligible amount flop-wise. However, the random matrix $\widetilde{\Omega}_2$ is not de-correlated with $\Omega$, potentially making this new random projection less-effective. We will address this issue in Section \ref{Sec:Analysis of RCP}.

\begin{algorithm}
\caption{Blocked Randomized complete pivoting algorithm}
\label{alg:BRCP}
\begin{algorithmic}
\REQUIRE
\STATE symmetric matrix $A\in \mathbb{R}^{n \times n}$, block size $b$,  
\STATE pivots number $q$, sampling dimension $p ~(p \ge q)$.
\ENSURE
\STATE permutation matrix $\Pi$, unit lower triangular matrix $L$, 
\STATE and block diagonal matrix $D$ such that $\Pi A \Pi^T = L D L^T$. \\
\algrule
\STATE \textbf{initialize} $k = 1$; $\alpha = \frac{\sqrt{2}}{2}$; $\Pi = I_n$; $L = I_n$; 
\STATE \textbf{draw} random i.i.d. $\Omega \in \mathbb{R}^{p  \times n}$, compute $B = \Omega A$;
\WHILE {$k < n$}
\STATE $b = {\bf min}(b, ~ n - k + 1)$; $i = 1$; 
\WHILE{$i \le b$}
\IF {$q=1$}
\STATE \textbf{compute} $\pi_k = {\bf argmax}_{k \le j \le n} \| B( : , j) \|_2$;
\STATE \textbf{make} corresponding interchanges on $A,~B,~\Pi,~\mbox{and}~L$ when $k \neq \pi_k$;
\ELSIF {$i = 1$}
\STATE \textbf{perform} partial QRCP on $B(:,k:n)$ to find $q$ pivots;
\STATE \textbf{make} corresponding interchanges on $A,~B,~\Pi,~\mbox{and}~L$;
\ENDIF
\STATE \textbf{perform} SBKP pivoting (Figure \ref{alg:SBKP}) on $A(k : n, k : n)$ and obtain $s$;
\STATE \textbf{compute} $\widehat{k} = k + s - 1$;
\STATE \textbf{compute} $D(k : \widehat{k}, k : \widehat{k})$ and $L(\widehat{k} + 1 : n, k : \widehat{k})$ by using the same technique in Lapack routine SYTRF;
\STATE \textbf{compute} $k = k + s,~i = i + s$;
\IF {$k < n$}
\IF {$q=1$}
\STATE \textbf{compute} $B( : , k : n) ~ -= B( : , k-s : \widehat{k}) L(k : n, k-s : \widehat{k})^T$; 
\ELSIF {$i > b$}
\STATE $B( :, k : n) = B( :, k : n) - B( :, k-i+1 : k-1) L(k-i+1 : k-1,k-i+1 : k-1)^{-1} L(k : n, k-i+1 : k-1)^T$;    
\ENDIF
\ENDIF
\ENDWHILE
\STATE \textbf{compute} Schur complement $A(k : n, k : n)$;
\IF {$k = n$}
\STATE $D(k,k) = A(k,k)$; break;
\ENDIF
\ENDWHILE
\end{algorithmic} 
\end{algorithm}

\subsection{Implementations of the RCP algorithm}\label{Sec:Implementations of RCP}
In this section, we discuss two possible LAPACK-style \cite{anderson1999lapack} block implementations of the RCP algorithm with Algorithm \ref{alg:BRCP}. The RCP algorithm discussed above corresponds to the special case $b = q = 1$.

Parameter $b$ is the block-size, and parameter $q$ is the pivot number that is either $q = 1$ or $q = b$. In both cases, Algorithm \ref{alg:BRCP} delays updating the Schur complement of the matrix $A$ until $b$ elimination steps have been processed. In the inner while loop, Algorithm \ref{alg:BRCP} determines $s$ using SBKP strategy, computes factors $L$ and $D$, and then updates the remaining columns in $B$; and in the outer while loop, it updates the Schur complement using Level $3$ BLAS. For $q = 1$, Algorithm \ref{alg:BRCP} accumulates $b$ pivot columns in $A$ by repeating $b$ times the process of computing one step of QRCP on the $B$ matrix, performing one column pivot in the $A$ matrix, and updating remaining columns in $B$. For $q = b$, Algorithm \ref{alg:BRCP} chooses $b$ column pivots in $A$ simultaneously by computing a $b$-step partial QRCP on the $B$ matrix, performing $b$ column pivots in $A$, and updating remaining columns in $B$.

Algorithm \ref{alg:BRCP} performs the same BLAS-3 operations on the Schur complements with both $q=1$ and $q=b$. On the other hand,  Algorithm \ref{alg:BRCP} requires a random matrix $\Omega \in \mathbb{R}^{p \times n}$ with row dimension $p \ge q$. In our implementation, we typically choose $b = 64$  and $p = q + 4$ for good computational performance. Considering that Algorithm \ref{alg:BRCP} spends a total of $O(b p n)$ operations on the random projection matrix $B$, 
it would appear that $q = 1$ is a much better option. 

In Figure \ref{Fig:Run time and growth factor between three RCP}, we compare the efficiency difference between $q=1$ and $q=b$ on random matrices ({\bf Type $6$} in numerical experiments). It confirms our analysis above that $q=1$ is the better option. Hence in the numerical experiments (Section \ref{Sec:Numerical experiment}), we only use Algorithm \ref{alg:BRCP} with $q=1$. Curious enough, the option $q=1$ also gives smaller element growth factors than option $q=b$, perhaps due to the likely reason that finding one pivot each time is more reliable than finding $b$ pivots in each loop.

\begin{figure}[htbp]
\centering
\includegraphics[width=1.0\textwidth]{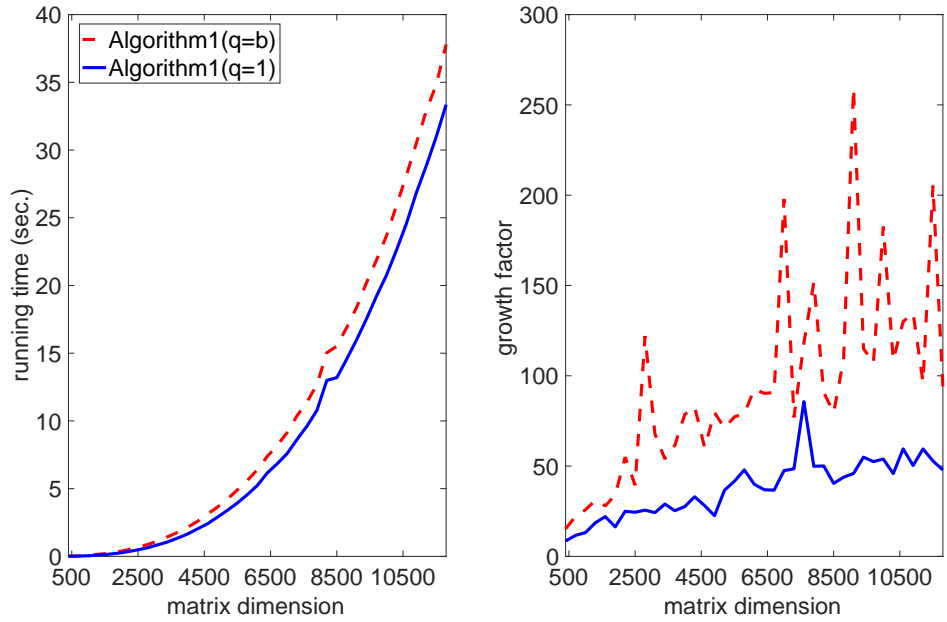}
\caption{run time and growth factor of Algorithm \ref{alg:BRCP} with different $q$}\label{Fig:Run time and growth factor between three RCP}
\end{figure}

\subsection{Operation counts}
In this section, we count the leading costs for the RCP algorithm, Bunch-Kaufman algorithm, and Aasen's algorithm. Here we just do analysis of the BLAS-2 versions of these algorithms, since same analysis results will also apply for the BLAS-3 versions. In practice, we only use the BLAS-3 versions since they are much more efficient than BLAS-2 versions in terms of communication costs.  

The operation count of the RCP algorithm is described in Figure \ref{Fig:Flowchart of flops for RCP}. The terms Mults, Divs, Adds and Comps stand for the number of multiplications, divisions, additions, and comparisons respectively for each step of the Algorithm \ref{alg:BRCP} with $b=q=1$. 

Table \ref{Tb:Flops of RCP BK and Aasen} gives flops upper bounds, for solving a symmetric $n\times n$ linear system of equations, by the RCP algorithm, Bunch-Kaufman algorithm, and Aasen's algorithm. All three algorithms are dominated by the same ${\displaystyle \frac{1}{3}n^3}$ multiplication and addition costs. Aasen's algorithm requires the fewest number of comparisons since it performs one column pivot per elimination step, whereas both the RCP and the Bunch-Kaufman algorithm perform up to two column pivots. Compared with the Bunch-Kaufman algorithm and Aasen's algorithm, the RCP algorithm performs additional $(2p-\frac{1}{2})n^2$ multiplications and additions. We choose $p = 5$ in the numerical experiments, so the additional work is $9.5 n^2$ flops, which is relatively insignificant for large $n$. In Section \ref{Sec:Numerical experiment}, the results of our numerical experiments match this analysis.

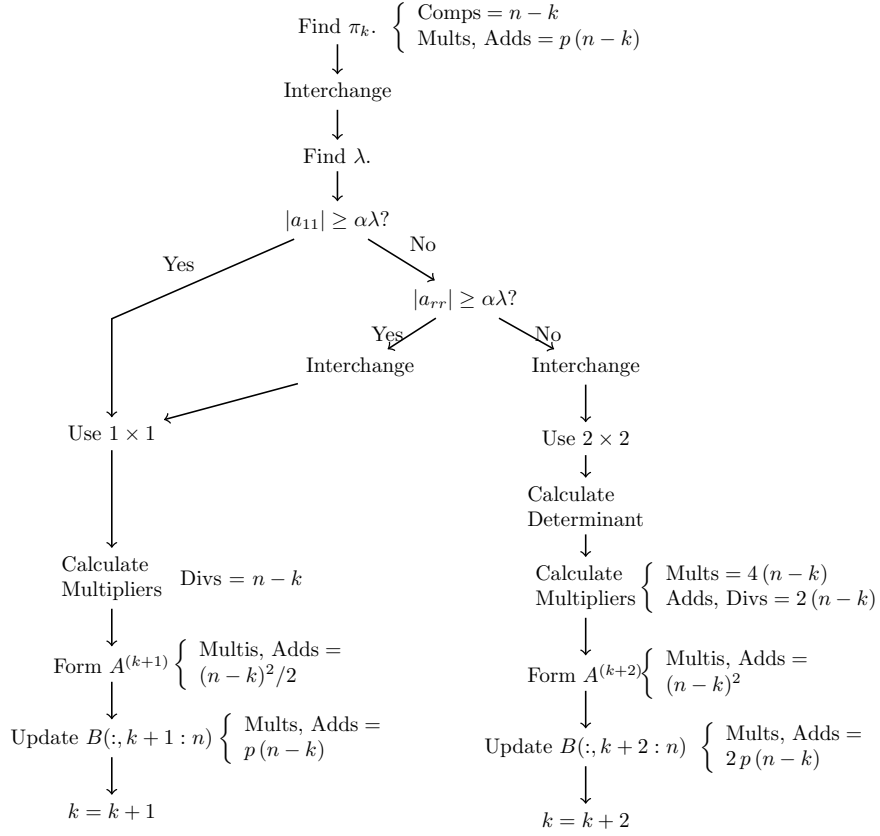
\begin{figure}[H]
\centering  
\begin{tikzpicture}[->,auto,semithick,align = left]
    \tikzset{
    term/.style = {right = 0.6cm}
    }
    \tikzstyle{every node} = [font = \small, scale = 0.8]
    \begin{scope}[node distance = 0.4cm, start chain = going below]
    \node[on chain] (s1) {Find $\pi_k$.};
    \node[term] at (s1) {$\left\{\begin{array}{l}
    \textrm{Comps} = n - k \\
    \textrm{Mults, Adds} = p \, (n - k) \\
    \end{array}
    \right.$};
    \node[on chain] (s2) {Interchange};
    \node[on chain] (s3) {Find $\lambda$.};
    \node[on chain] (s4) {$\left| a_{11} \right| \ge \alpha \lambda$?};
    \node[on chain] (s5) at ([shift = ({ - 3cm, - 0.8cm})] s4) {};
    \node[on chain] (s6) at ([shift = ({1.7cm, - 0.4cm})] s4) {$\left| a_{r r} \right| \ge \alpha \lambda$?};
    \end{scope}

    \begin{scope}[node distance = 0.5cm, start chain = going below]
    \node[on chain] (s7) at ([shift = ({ - 1.4cm, - 0.9cm})] s6) {Interchange};
    \node[on chain] (s9) [below = 1.2cm of s5] {Use $1 \times1$};

    \node[on chain,below = 1.3cm] (s11) {Calculate \\
    Multipliers};
    \node[right = 0.8cm] at (s11) {Divs = $n - k$};
    \node[on chain] (s14) {Form $A^{(k + 1)}$};                                                                                              
    \node[right = 0.7cm] at (s14) {$\left\{\begin{array}{l}
    \textrm{Multis, Adds} = \\
    (n - k)^2 / 2
    \end{array}\right.$};
    \node[on chain] (s16) {Update $B( : , k + 1 : n)$};
    \node[right = 1.3cm] at (s16) {$\left\{\begin{array}{l}
    \textrm{Mults, Adds} = \\
    p \, (n - k)
    \end{array}\right.$};

    \node[on chain] (s18) {$k = k + 1$};
    \end{scope}

    \begin{scope}[node distance = 0.5cm, start chain = going below]
    \node[on chain] (s8) at ([shift = ({1.6cm, - 0.9cm})]s6) {Interchange};
    \node[on chain] (s10) {Use $2 \times 2$};
    \node[on chain, below = 0.3cm] (s12) {Calculate \\
    Determinant};
    \node[on chain, below = 0.3cm] (s13) {Calculate \\
    Multipliers};
    \node[term] at (s13) {$\left\{\begin{array}{l}
    \textrm{Mults} = 4 \, (n - k) \\
    \textrm{Adds, Divs}=2 \, (n - k)
    \end{array}
    \right.$};
    \node[on chain] (s15) {Form $A^{(k + 2)}$};                                                                                              
    \node[term] at (s15) {$\left\{\begin{array}{l}
    \textrm{Multis, Adds} = \\
    (n - k)^2
    \end{array}\right.$};
    \node[on chain] (s17) {Update $B( : , k + 2 : n)$};
    \node[right = 1.4cm] at (s17) {$\left\{\begin{array}{l}
    \textrm{Mults, Adds} = \\
    2 \, p \, (n - k)
    \end{array}\right.$};
    \node[on chain] (s19) {$k = k + 2$};
    \end{scope}

    \path (s1) edge (s2);
    \path (s2) edge (s3);
    \path (s3) edge (s4);
    \path (s4) edge node  {No} (s6);
    \path (s5);\pgfgetlastxy{\sx}{\sy}
    \draw (s4) -- node [above left] {Yes}(\sx,\sy) -> (s9);
    \path (s6) edge node [left] {Yes} (s7);
    \path (s6) edge node [right] {No} (s8);
    \path (s8) edge (s10);
    \path (s7) edge (s9);
    \path (s9) edge (s11);
    \path (s10) edge (s12);
    \path (s12) edge (s13);
    \path (s11) edge (s14);
    \path (s13) edge (s15);
    \path (s14) edge (s16);
    \path (s15) edge (s17);
    \path (s16) edge (s18);
    \path (s17) edge (s19);
    \end{tikzpicture}

    \caption{Flowchart of operation counts for computing $A^{(k + s)}$ in RCP, $s = 1$ or $2$}  \label{Fig:Flowchart of flops for RCP}
    \end{figure}

    \begin{table}[htbp]
    \centering
    \caption{Comparison of the upper bounds for the number of operations}\label{Tb:Flops of RCP BK and Aasen}
    \begin{tabular}{|l|c|c|c|}
    \hline
  Algorithm & Mults and Divs & Adds & Comps \\
  \hline
  \multicolumn{1}{|l|}{\textbf{RCP}} & & &  \\
  \quad decomposition & $\frac{1}{6} n^3 + (p + 1) n^2 $  & $\frac{1}{6} n^3 + (p - \frac{1}{2}) n^2 $ & $n^2$  \\[3pt]
  \quad solution phase &  $n^2 $ & $n^2 $  &  0 \\[3pt]
  \quad total & $\frac{1}{6} n^3 + (p + 2) n^2 $  & $\frac{1}{6} n^3 + (p + \frac{1}{2}) n^2 $ & $n^2$  \\[3pt]
  \hline 
  \multicolumn{1}{|l|}{\textbf{Bunch-Kaufman}} & & &  \\
  \quad decomposition & $\frac{1}{6} n^3 + \frac{3}{4} n^2 $ & $\frac{1}{6} n^3 + \frac{1}{4} n^2 $ & $n^2 $ \\[3pt]
  \quad solution phase & $n^2 $  & $n^2 $ & 0  \\[3pt]
  \quad total & $\frac{1}{6} n^3 + \frac{7}{4} n^2 $  & $\frac{1}{6} n^3 + \frac{5}{4} n^2 $  & $n^2 $  \\[3pt]
  \hline 
  \multicolumn{1}{|l|}{\textbf{Aasen}} & & &  \\
  \quad decomposition & $\frac{1}{6} n^3 + 2 n^2 $ & $\frac{1}{6} n^3 +  n^2 $ & $\frac{1}{2} n^2 $ \\[3pt]
  \quad solution phase & $n^2 $  & $n^2 $ & n  \\[3pt]
  \quad total & $\frac{1}{6} n^3 + 3 n^2 $  & $\frac{1}{6} n^3 + 2 n^2 $  & $\frac{1}{2} n^2 $  \\[3pt]
  \hline
  \end{tabular}
  \end{table}

\section{Analysis of randomized complete pivoting algorithm}\label{Sec:Analysis of RCP}
To demonstrate its numerical reliability, in this section, we develop probabilistic upper bounds on both entries of the lower triangular matrix $L$ and on the element growth factor for the RCP algorithm. These upper bounds are much stronger than those for the Bunch-Kaufman algorithm. As the success of the RCP algorithm also depends on the numerical reliability of the random projections, we will study their numerical stability issues as well. 

\subsection{Reliability of randomized column pivots}
The RCP algorithm chooses column pivots using norm-based column selections on the correlated random projections. Lemma \ref{Le:Randomized Norm Preservation} below justifies the reliability of these pivots and is the basis of much of our analysis on the RCP algorithm. In order to prove Lemma \ref{Le:Randomized Norm Preservation}, we first show that the random projections do satisfy the Johnson-Lindenstrauss Lemma in a conditional sense, and then remove the condition with the Law of Total Probabilities. 

\begin{lemma}\label{Le:Randomized Norm Preservation}
(Randomized Norm Preservation) 
Given $A \in \mathbb{R}^{n \times n}$, over-sampling parameter $p > 0$, and $0 < \epsilon, ~\delta < 1$. Assume that for every $k$ for which the RCP algorithm performs a random projection, the random matrix $\Omega^{(k)}$ is chosen as in equation \eqref{Eq:Update matrix B} and let $\pi_k = {\bf argmax}_{k \le j \le n} \left\| \Omega^{(k)} A^{(k)}( : , j) \right\|_2$ be the column pivot. Then 
\begin{equation}\label{Eq:Randomized Norm Preservation}
\mathbb{P} \left( \left\| A^{(k)}( : , \pi_k) \right\|_2  \ge \sqrt{ \frac{1 - \epsilon}{1 + \epsilon} } \,  \left\| A^{(k)} \right\|_{1,2} \right) \ge 1- \delta
\end{equation}
holds for all such $k$, provided that $p \ge \frac{4}{\epsilon^2 - \epsilon^3} \ln{ \left( \frac{n(n + 1)}{\delta} \right) } $.
\end{lemma}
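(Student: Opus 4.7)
The plan is to establish the $\epsilon$-JL condition simultaneously for every column of every Schur complement $A^{(k)}$ encountered by the algorithm, and to read off \eqref{Eq:Randomized Norm Preservation} from those JL inequalities together with the defining argmax property of $\pi_k$. Fix a step $k$ and let $j^\star \ge k$ attain $\|A^{(k)}\|_{1,2} = \|A^{(k)}(:,j^\star)\|_2$. Assuming the $\epsilon$-JL condition holds for both $A^{(k)}(:,j^\star)$ and $A^{(k)}(:,\pi_k)$ under the random mapping $\Omega^{(k)}$, chaining the upper JL bound at $j=\pi_k$, the argmax inequality $\|\Omega^{(k)}A^{(k)}(:,\pi_k)\|_2 \ge \|\Omega^{(k)}A^{(k)}(:,j^\star)\|_2$, and the lower JL bound at $j=j^\star$ yields
\begin{equation*}
\sqrt{1+\epsilon}\,\|A^{(k)}(:,\pi_k)\|_2 \;\ge\; \tfrac{1}{\sqrt{p}}\|\Omega^{(k)}A^{(k)}(:,\pi_k)\|_2 \;\ge\; \tfrac{1}{\sqrt{p}}\|\Omega^{(k)}A^{(k)}(:,j^\star)\|_2 \;\ge\; \sqrt{1-\epsilon}\,\|A^{(k)}\|_{1,2},
\end{equation*}
which rearranges to \eqref{Eq:Randomized Norm Preservation}. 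Hence the problem reduces to guaranteeing the $\epsilon$-JL condition, with an appropriately small failure probability, for every column $A^{(k)}(:,j)$ appearing throughout the execution.

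Next I would count the (step, column) pairs: across all steps of the factorization the total number of active columns is bounded by $\sum_k (n-k+1) \le n(n+1)/2$. If each pair failed to satisfy the $\epsilon$-JL condition with probability at most $2\exp\bigl(-(\epsilon^2-\epsilon^3)p/4\bigr)$, a union bound would give overall failure probability at most $n(n+1)\exp\bigl(-(\epsilon^2-\epsilon^3)p/4\bigr)$, and demanding this be $\le \delta$ reproduces the lemma's oversampling bound $p \ge \tfrac{4}{\epsilon^2-\epsilon^3}\ln\bigl(n(n+1)/\delta\bigr)$ exactly. Combined with the chain above, this closes the proof modulo the per-pair JL bound.

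The main obstacle, and the heart of the argument, is securing that per-pair bound despite correlations. By equation \eqref{Eq:Update matrix B}, $\Omega^{(k)}$ is the trailing block $\widetilde{\Omega}_2$ of a column-permuted copy of $\Omega$, where the permutation is assembled from all prior pivot choices; since the pivots are argmax functionals of $\Omega$, the matrices $\Omega^{(k)}$ and $A^{(k)}$ are intertwined through the algorithm's history, and Lemma \ref{Le:JL lemma} cannot be invoked as a black box. My plan is to condition on the algorithm's history through step $k-1$ --- enough information to freeze $A^{(k)}$ and to identify which columns of $\Omega$ enter $\Omega^{(k)}$ --- so that inside the conditioned probability space, $A^{(k)}(:,j)$ is a fixed vector and $\Omega^{(k)}$ carries only Gaussian randomness; Lemma \ref{Le:JL lemma} then applies with the stated failure probability, and Theorem \ref{Th:Law of Total Probability} (Law of Total Probability) removes the conditioning to give the same bound unconditionally.

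The most delicate part will be setting up this conditioning so that $\Omega^{(k)}$ genuinely retains its i.i.d.\ $\mathcal{N}(0,1)$ distribution, because conditioning only on the pivot indices $(\pi_1,\dots,\pi_{k-1})$ biases the unused columns of $\Omega$ through the argmax comparisons used to produce them. The fix I plan is to enlarge the conditioning $\sigma$-algebra to absorb the values of the consumed pivot columns of $\Omega$ as well, so that column-wise independence of $\Omega$ preserves the Gaussian law on the remaining columns while $A^{(k)}$ remains measurable. Once that is arranged, the three ingredients --- conditional JL, the law of total probability, and the union bound over the $n(n+1)/2$ pairs --- combine to give the stated probability bound at the stated oversampling $p$.
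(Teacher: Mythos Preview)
Your approach is essentially the paper's: secure the $\epsilon$-JL condition for every column of every Schur complement by conditioning, invoke Lemma~\ref{Le:JL lemma}, remove the conditioning via the Law of Total Probability (Theorem~\ref{Th:Law of Total Probability}), and union-bound over the at most $n(n+1)/2$ (step, column) pairs. The paper conditions on the full permutation and block-diagonal pattern pair $(\Pi,\Delta)$ rather than on the step-$k$ history, but the structure and the count are identical, and your chain deriving \eqref{Eq:Randomized Norm Preservation} from the two JL inequalities plus the argmax property is exactly what the paper writes.

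Where your plan diverges is in the conditioning fix, and here there is a genuine gap. The first pivot $\pi_1 = {\bf argmax}_j \|\Omega A(:,j)\|_2$ depends on \emph{every} column of $\Omega$, because $\Omega A(:,j) = \sum_{i=1}^n A(i,j)\,\Omega(:,i)$; thus the event $\{\pi_1 = j_1\}$ imposes inequalities involving all columns of $\Omega$, and additionally conditioning on the values of the consumed columns does not undo this --- the surviving columns of $\Omega$ are no longer conditionally i.i.d.\ Gaussian. (A two-column example with $A=I$ already shows it: conditioning on $\pi_1=1$ and on $\Omega(:,1)=c$ forces $\|\Omega(:,2)\|_2 \le \|c\|_2$.) The paper does not actually resolve this point either: it asserts that $\widetilde{\Omega}_2$ ``is an I.I.D.\ Gaussian matrix conditioned under permutation $\Pi$'' and proceeds directly to Lemma~\ref{Le:JL lemma}. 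So you have correctly located the delicate step, and at the level of rigor the paper supplies your proof matches it; but the specific $\sigma$-algebra enlargement you propose would not deliver the claimed conditional Gaussianity.
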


\begin{proof} The diagonal permutation RCP chooses in the $LDL^T$ factorization is a discrete random variable. There are two parts in the RCP pivoting scheme. At each step $k$, RCP chooses a column pivot based on column selection in the random projection matrix, and then makes a deterministic symmetric pivot based on the SBKP strategy. The $LDL^T$ factorization is uniquely determined by the diagonal permutation $\Pi$ and block-diagonal pattern $\Delta$, which indicates whether each diagonal entry in $D$ is a $1 \times 1$ pivot or part of a $2 \times 2$ pivot. 

Each $LDL^T$ factorization process with RCP results in a diagonal permutation and block-diagonal pattern pair $\left(\Pi, \Delta\right)$, with some matrix $A$ dependent probability $\mathbb{P}\left(\Pi, \Delta\right)$. It is clear that these $\left(\Pi, \Delta\right)$ pairs are mutually exclusive and 
satisfy
\begin{equation}\label{Eqn:prob}
\sum_{\left(\Pi, \Delta\right)} \mathbb{P}\left(\Pi, \Delta\right) = 1. 
\end{equation}
Let $\Omega\in \mathbb{R}^{p \times n}$ be the I.I.D. Gaussian matrix drawn by RCP. 

RCP begins its first column pivot with $k = 1$. For $1 \le j \le n$, we denote the event that $ A( : , j)$ satisfies $\epsilon$-JL condition by $E_{1, j}$. Thus 
\begin{equation}\label{Eqn:E1}
 E_1 \stackrel{def}{=} \bigcap_{1 \le j \le n} E_{1, j}
\end{equation}
is the event where every column of $A^{(1)}=A$ satisfies the  $\epsilon$-JL condition under the random matrix $\Omega$. For any diagonal permutation $\Pi$ and block-diagonal pattern pair $\left(\Pi, \Delta\right) $, the column pivot $\pi_1$ in the conditional event $E_1  \left| \right. \left(\Pi, \Delta\right) $ must satisfy for any $1 \le j \le n$,
\begin{equation}\label{eq:condition between B and A}
\sqrt{1 - \epsilon} \left\| A^{(1)} ( : , j) \right\|_2  \le \left\| \frac{1}{\sqrt{p}} B( : , j) \right\|_2  \le \left\| \frac{1}{\sqrt{p}} B( : , \pi_1) \right\|_2 \le \sqrt{1 + \epsilon} \left\| A^{(1)} ( : , \pi_1) \right\|_2,  
\end{equation}
which implies 
\begin{equation}\label{Eqn:Cond1}
 \left\| A^{(1)}( : , \pi_1) \right\|_2  \ge \sqrt{ \frac{1 - \epsilon}{1 + \epsilon} } \,  \left\| A^{(1)} \right\|_{1,2}.
\end{equation}

Under the diagonal permutation and block-diagonal pattern pair $\left(\Pi, \Delta\right)$, we further assume that RCP performs a random projection at $k$-th elimination step under $\left(\Pi, \Delta\right)$ for some $k$. Partition $\Omega$ as in equation \eqref{Eq:Update matrix B}. It is clear that $\widetilde{\Omega}_2$ (the last $n-s$ rows of $\widetilde{\Omega}$) is an I.I.D. Gaussian matrix conditioned under permutation $\Pi$. 

Let $A^{(k)}$ be the Schur Complement. As with the case $k = 1$, we denote the event that $A^{(k)}( : , j)$ satisfies $\epsilon$-JL condition by $E_{k, j}$, and let 
\begin{equation}\label{Eqn:Ek}
 E_k \stackrel{def}{=} \bigcap_{k \le j \le n} E_{k, j}
\end{equation}
be the event that all columns of $A^{(k)}$ satisfy the $\epsilon$-JL condition. As in Equation \eqref{Eqn:Cond1}, the column pivot $\pi_k$ in the conditional event $E_k  \left| \right. \left(\Pi, \Delta\right) $ must satisfy the condition
\begin{equation}\label{Eqn:Condk}
 \left\| A^{(k)}( : , \pi_k) \right\|_2  \ge \sqrt{ \frac{1 - \epsilon}{1 + \epsilon} } \,  \left\| A^{(k)} \right\|_{1,2}.
\end{equation}

In case RCP does not perform a random projection at $k$-th elimination step, we define $E_{k, j}$ and $E_k$ to be identically true events with $\mathbb{P}\left(E_{k, j}\right) = \mathbb{P}\left(E_{k}\right) = 1$.  

To develop a lower bound on the probability $\mathbb{P} (E_k)$, we appeal to Law of Total Probability (Theorem \ref{Th:Law of Total Probability}), 

$$
\mathbb{P} (E_k) = \sum_{\left(\Pi, \Delta\right)} \mathbb{P} \left( E_k \left| \right. \left(\Pi, \Delta\right) \right) \mathbb{P} \left(\Pi, \Delta\right), 
$$
where the sum is over all possible permutation and block-diagonal pattern pairs. By the Johnson-Lindenstrauss Lemma (Lemma \ref{Le:JL lemma}),  
\begin{align*}
\mathbb{P} \left( E_k \left| \right. \left(\Pi, \Delta\right) \right) & = \mathbb{P} \left( \bigcap_{k \le j \le n} E_{k, j} \left| \right. \left(\Pi, \Delta\right) \right)  \ge \sum_{j = k}^n \mathbb{P}\left(E_{k,j} | \left(\Pi, \Delta\right) \right) - (n - k) \\
& \ge \sum_{j = k}^n \left(1 - 2 \exp \left( - \frac{ (\epsilon^2 - \epsilon^3) p }{4} \right)\right) - (n - k) \\
& = 1- 2(n - k + 1) \exp \left( - \frac{ (\epsilon^2 - \epsilon^3) p }{4} \right).
\end{align*}

It now follows from Equation \eqref{Eqn:prob} that  
\begin{align*}
\mathbb{P}(E_k) & \ge \left( 1 - 2 (n - k + 1) \exp \left( - \frac{ (\epsilon^2 - \epsilon^3) p }{4} \right) \right) \sum_{\left(\Pi, \Delta\right)} \mathbb{P}\left(\Pi, \Delta\right) \\
& = 1- 2 (n - k + 1) \exp \left( - \frac{ (\epsilon^2 - \epsilon^3) p }{4} \right).
\end{align*}

Thus, for large enough $p$, the norm of the pivot column  chosen by RCP is a factor of ${\displaystyle \sqrt{ \frac{1 - \epsilon}{1 + \epsilon}}}$ away from optimal for any given $k$ with high probability. 

To demonstrate that RCP chooses all such pivot columns with high probability, we let  
\[{\displaystyle E \stackrel{def}{=} \bigcap_{1 \le k \le n} E_k }\]
to be the event that all the column pivots chosen by RCP satisfy the $\epsilon$-JL condition. It follows that 
 \begin{align*}
\mathbb{P} \left( E \right) & = \mathbb{P} \left( \bigcap_{k = 1}^n E_k \right) \ge \sum_{k = 1}^n \mathbb{P}(E_k) - (n - 1)  \\
& \ge \sum_{k = 1}^n \left( 1 - 2 (n - k + 1) \exp \left( - \frac{ (\epsilon^2 - \epsilon^3) p }{4} \right) \right) - (n - 1)  \\
& \ge 1 - n (n + 1) \exp \left( - \frac{ (\epsilon^2 - \epsilon^3) p }{4} \right), 
\end{align*}
the last expression is bounded below by $1 - \delta$ for $p \ge \frac{4}{\epsilon^2 - \epsilon^3} \ln{ \left( \frac{n (n + 1)}{\delta} \right) }$.  
\end{proof}

Lemma \ref{Le:Randomized Norm Preservation} suggests that $p$ needs only to grow logarithmically with $n$ and $1/\delta$ for reliable pivots. For illustration, if we choose $n = 1000, \epsilon = 0.5, \delta = 0.05$, then $p \ge 538$. However, in practice, $p$ value like $5$ suffices for RCP to obtain reliable pivots and hence reliable block $LDL^T$ factorization.

\subsection{Upper Bounds on entries of the L matrix} \label{Sec:Analysis of bounds for L}
In addition to potentially exponential element growth, the Bunch-Kaufman algorithm can also become numerically unstable 
due to potentially unbounded entries in the computed $L$ matrix~\cite{higham1997stability}. In contrast, the entries in the $L$ matrix are upper bounded by $O(1)$ with symmetric complete pivoting~\cite{bunch1971direct}. In this section, we show that, with high probability, the entries of the matrix $L$ are upper bounded by $O\left(\sqrt{n}\right)$ with the RCP algorithm, thus avoiding the potential numerical instability caused by unbounded $L$ entries. 

\begin{theorem}\label{Thm:Upper Bounds on L entries}
Given $A \in \mathbb{R}^{n \times n}$, $0 < \epsilon, ~\delta < 1$, and over-sampling parameter 
\[{\displaystyle p \ge \frac{4}{\epsilon^2 - \epsilon^3} \ln{ \left( \frac{n(n + 1)}{\delta} \right) },} \]
then with probability at least $1 -\delta$,  
\begin{equation}\label{Eqn:UpperBoundL}
 {\displaystyle \left| l_{i,k} \right| \le \frac{1 + \sqrt{\frac{1 + \epsilon}{1 - \epsilon}}\, \sqrt{n - k + 1}}{{\bf min}\left(\alpha, 1 - \alpha^2\right)}} \quad \mbox{for all}\quad 1 \le k < i \le n. 
 \end{equation}
\end{theorem}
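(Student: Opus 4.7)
The plan is to condition on the high-probability event supplied by Lemma~\ref{Le:Randomized Norm Preservation} and then perform a case analysis over the three non-trivial SBKP branches in Figure~\ref{alg:SBKP}. First I would invoke Lemma~\ref{Le:Randomized Norm Preservation} with the stated oversampling $p$, so that with probability at least $1-\delta$ the pivot column chosen at every step $k$ satisfies $\|A^{(k)}(:,\pi_k)\|_2 \ge \sqrt{(1-\epsilon)/(1+\epsilon)}\,\|A^{(k)}\|_{1,2}$, i.e., $\|A^{(k)}\|_{1,2} \le \sqrt{(1+\epsilon)/(1-\epsilon)}\,\|A^{(k)}(:,\pi_k)\|_2$. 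All subsequent bounds are argued on this event, and I abbreviate $C_k := \sqrt{(1+\epsilon)/(1-\epsilon)}\,\sqrt{n-k+1}$, so that~\eqref{Eqn:UpperBoundL} reads $|l_{i,k}| \le (1+C_k)/\min(\alpha,1-\alpha^2)$.

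The crux is to extract a uniform entry bound on the Schur complement from Lemma~\ref{Le:Randomized Norm Preservation}. After the column swap that brings $\pi_k$ to position $k$, let $\lambda = \max_{i>k}|a_{i,k}|$ as in Figure~\ref{alg:SBKP}. In SBKP branches (2) and (3) one has $|a_{k,k}| < \alpha\lambda$, so every entry of the pivot column has magnitude at most $\lambda$ and $\|A^{(k)}(:,k)\|_2 \le \sqrt{n-k+1}\,\lambda$. Plugging into the conclusion of Lemma~\ref{Le:Randomized Norm Preservation} gives $\|A^{(k)}(:,j)\|_2 \le C_k\lambda$ for every column index $j \ge k$, and consequently $|a_{i,r}| \le C_k\lambda$ for every off-pivot row $i \ne k,r$. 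This off-pivot entry bound, which is unavailable to Bunch-Kaufman partial pivoting, is the engine of the proof.

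With this bound in hand I would check each SBKP branch. In branch~(1), where $|a_{k,k}| \ge \alpha\lambda$, the multipliers $a_{i,k}/a_{k,k}$ are bounded by $1/\alpha$. In branch~(2), after swapping rows and columns $k$ and $r$, the pivot is $a_{r,r}$ with $|a_{r,r}| \ge \alpha\lambda$; the multiplier $\pm\lambda/a_{r,r}$ at the post-swap image of row $r$ is bounded by $1/\alpha$, while the remaining multipliers $a_{i,r}/a_{r,r}$ are bounded by $C_k\lambda/(\alpha\lambda) = C_k/\alpha$. In branch~(3), the $2\times 2$ pivot $M$ has off-diagonal $\pm\lambda$ and diagonals of magnitude less than $\alpha\lambda$; a split into the two sign regimes of $a_{k,k}a_{r,r}$ gives $|\det M| \ge (1-\alpha^2)\lambda^2$. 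Expanding $A_{21}M^{-1}$ column by column and inserting $|a_{i,k}| \le \lambda$, $|a_{i,r}| \le C_k\lambda$, $|a_{k,k}|,|a_{r,r}| < \alpha\lambda$ bounds each entry of $A_{21}M^{-1}$ by $\max\bigl((\alpha+C_k)/(1-\alpha^2),\,(1+\alpha C_k)/(1-\alpha^2)\bigr) \le (1+C_k)/(1-\alpha^2)$. Combining the three branches yields $|l_{i,k}| \le \max\bigl(C_k/\alpha,\,(1+C_k)/(1-\alpha^2)\bigr) \le (1+C_k)/\min(\alpha,1-\alpha^2)$, which is~\eqref{Eqn:UpperBoundL}.

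The main obstacle I anticipate is the index bookkeeping in branches~(2) and~(3) to identify which specific matrix entry becomes which multiplier after the various row and column swaps, together with verifying the determinant lower bound $(1-\alpha^2)\lambda^2$ in both sign regimes of $a_{k,k}a_{r,r}$ rather than only the easier one.
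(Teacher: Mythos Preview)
Your proposal is correct and follows essentially the same approach as the paper: condition on the event of Lemma~\ref{Le:Randomized Norm Preservation}, use it to bound $\|A^{(k)}(:,r)\|_2 \le C_k\lambda$ via the pivot-column norm, and then perform the three-branch SBKP case analysis to bound the multipliers, with the paper presenting the argument for $k=1$ and then recursing on the Schur complement. Your anticipated obstacle regarding the determinant lower bound is not a real difficulty, since $|a_{r,k}^2 - a_{k,k}a_{r,r}| \ge \lambda^2 - |a_{k,k}||a_{r,r}| > (1-\alpha^2)\lambda^2$ holds irrespective of the sign of $a_{k,k}a_{r,r}$.
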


\begin{proof} There are three non-trivial cases when the SBKP strategy (Figure \ref{alg:SBKP}) is applied to the first column of matrix $A \stackrel{def}{=} A^{(1)}$ with $k = 1$, resulting in an $s \times s$ pivoting block with $s = 1$ or $2$. The SBKP strategy will then proceed to perform subsequent eliminations on the Schur complement $A^{(1 + s)}\stackrel{def}{=}\left(a_{i j}^{(1+s)}\right)_{i,j = k+s}^n$. Below we consider each case. 

\begin{description}
\item[{\bf Case (1):}] $a_{1,1}$ is the $1 \times 1$ pivot with $\left| a_{1,1} \right| \ge \alpha \lambda$, where $\lambda = {\bf max}_{2 \le j \le n} \left|a_{j,1}\right|$. In this case, 
\begin{align*}
l_{j, 1} & = a_{j, 1} / a_{1,1}, \quad \mbox{for}\quad j > 1, \end{align*}
\begin{align*}
\mbox{and therefore} \quad \left| l_{j, 1} \right|  & \le \frac{1}{\alpha},
\end{align*}
which obviously satisfies equation \eqref{Eqn:UpperBoundL}. 
\item[{\bf Case (2):}] $a_{r, r}$ is the $1 \times 1$ pivot with $\left| a_{r r} \right| \ge \alpha \lambda$. After the row and column interchanges, we have 
\begin{equation}\label{Eqn:lj1}
l_{j, 1} = \left\{\begin{array}{ll} 
a_{j, r} / a_{r, r}, & \mbox{for}\quad 2 \le j \le n, ~j \not= r, \\
a_{1, r} / a_{r, r}, & \mbox{for}\quad j = r.\end{array}\right.
\end{equation}

For any $j \neq r$, it follows from Lemma \ref{Le:Randomized Norm Preservation} that 
$$
|a_{j, r} | \le \left\| A( : , r) \right\|_2 \le \sqrt{\frac{1 + \epsilon}{1 - \epsilon}}\, \left\|A( : , 1) \right\|_2 \le \sqrt{\frac{1 + \epsilon}{1 - \epsilon}}\, \lambda \, \sqrt{n}. 
$$
Plugging this into equation \eqref{Eqn:lj1} yields 
\[{\displaystyle \left| l_{j, 1} \right|  \le \frac{\sqrt{\frac{1 + \epsilon}{1 - \epsilon}} \, \sqrt{n}}{\alpha}}, \]
which satisfies equation \eqref{Eqn:UpperBoundL} with $k=1$. 
\item[{\bf Case (3):}] The SBKP strategy chooses a $2 \times 2$ pivot $\left( 
\begin{array}{cc}
a_{1,1} & a_{r,1} \\
a_{r,1} & a_{r,r}
\end{array}
\right)$ with $s = 2$. After the row and column interchanges, we have that for any $j \ge 3$, let 
\[ 
i = \left\{\begin{array}{ll} 
j, & \mbox{for}\quad 3 \le j \le n, ~j \not= r, \\
2, & \mbox{for}\quad j = r.\end{array}\right.\]
Then 
\begin{align*}
\left(l_{j, 1}, \;\; l_{j, 2}\right) &= \left(a_{i, 1}, \;\; a_{i, r}\right)  \left(
\begin{array}{cc}
a_{1,1} & a_{r,1} \\
a_{r,1} & a_{r,r}
\end{array}
\right)^{-1} \\
&= \frac{1}{a_{1,1} \, a_{r, r} - a_{r,1}^2}
\left(a_{i, 1} \, a_{r, r} - a_{r, 1} \, a_{i, r}, \;\; a_{1,1} \, a_{i, r} - a_{r, 1} \, a_{i, 1}\right).
\end{align*}
\noindent Since $| a_{1, 1} | < \alpha \lambda$, $| a_{r, r} | < \alpha \lambda$, and
\[| a_{i, r} | \le \sqrt{\frac{1 + \epsilon}{1 - \epsilon}}\, \lambda \, \sqrt{n}, \]
we deduce that 
$$
| l_{j, 1} | ,\;\; | l_{j, 2} | \le \frac{\lambda^2 +  \sqrt{\frac{1 + \epsilon}{1 - \epsilon}}\, \lambda^2 \, \sqrt{n}}{\lambda^2 \, (1 - \alpha^2)} = \frac{1 +  \sqrt{\frac{1 + \epsilon}{1 - \epsilon}}\, \sqrt{n} }{1 - \alpha^2}, 
$$
which again satisfies equation \eqref{Eqn:UpperBoundL} with $k=1$.
\end{description} 

To complete the proof for Theorem \ref{Thm:Upper Bounds on L entries}, we recursively apply the same argument above to the Schur complement $A^{(k)}$ in each case, taking note that the dimension of $A^{(k)}$ for any $k$ is $n-k+1$. 
\end{proof}

In next section, we will show that we can also deduce a much better column growth factor bound using Wilkinson's techniques for GECP \cite{wilkinson1961error}. His proof is dependent on the fact that the pivots in GECP are maximum elements in the Schur complement. Our pivots are not necessarily maximum elements, but they are closely related to the maximum column norm in the Schur complement. Thus, we need more subtle analysis.

\subsection{Probability analysis on column growth factor}\label{Sec:Analysis of growth factor of RCP}
In this section, we present a rigorous upper bound on the column norm growth factor of $L D L^T$ factorization computed by the RCP algorithm. 

\begin{theorem}\label{Th:Column Growth Factor for RCP}
(Column Growth Factor for Randomized Complete Pivoting).\\ 
Given $A \in \mathbb{R}^{n \times n}$, $0 < \epsilon, ~\delta < 1$, and over-sampling parameter 
\[{\displaystyle p \ge \frac{4}{\epsilon^2 - \epsilon^3} \ln{ \left( \frac{n(n + 1)}{\delta} \right) },} \]
then the column norm growth factor of RCP algorithm, with $\alpha = \frac{\sqrt{2}}{2}$ in the SBKP strategy, satisfies
\begin{equation*}
   \rho_{\textrm{col}} \le \left(\sqrt{\frac{2(1 + \epsilon)}{1 - \epsilon}}\right)^{3 + \ln(n-1)} \, \left(\sqrt{n+2}\right)^{2 + \ln(n-1)} 
   \end{equation*}
with probability at least $1-\delta$.
\end{theorem}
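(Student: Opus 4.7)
The plan is to condition on the high-probability event of Lemma~\ref{Le:Randomized Norm Preservation} and then adapt Wilkinson's classical Hadamard-plus-induction analysis of GECP \cite{wilkinson1961error} to our setting, with the key modification that the ``pivot is the largest entry'' property of GECP is replaced by the randomized near-optimality of the RCP column pivot.

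First, I condition on the event of Lemma~\ref{Le:Randomized Norm Preservation}, which holds with probability at least $1-\delta$: for every $k$ at which RCP performs a column pivot, the chosen column $\pi_k$ of the Schur complement satisfies $\|A^{(k)}(:,\pi_k)\|_2 \ge c\,\mu_k$, where $c = \sqrt{(1-\epsilon)/(1+\epsilon)}$ and $\mu_k \stackrel{def}{=} \|A^{(k)}\|_{1,2}$. All subsequent estimates live on this event.

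Second, I convert this column-norm lower bound into a lower bound on the absolute determinant of each pivot block chosen by SBKP. After permuting the pivot column to the first position, write $\lambda = |a_{r,k}|$ for the largest off-diagonal entry of that column. The elementary inequality $\|A^{(k)}(:,\pi_k)\|_2^2 \le |a_{k,k}|^2 + (n-k)\lambda^2$ combined with a case analysis on SBKP with $\alpha=\sqrt{2}/2$ yields: in SBKP cases (1) and (2), the $1\times 1$ pivot $p$ satisfies $|p| \ge c\,\mu_k/\sqrt{2(n-k)+1}$; in SBKP case (3), the $2\times 2$ pivot $D$ satisfies $|\det D| \ge (1-\alpha^2)\lambda^2 = \lambda^2/2 \ge c^2\mu_k^2/(2(n-k)+1)$. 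Uniformly, $|\det D_t|^{1/s_t} \ge c\,\mu_{k_t}/\sqrt{2(n-k_t)+1}$, where $s_t\in\{1,2\}$ is the size of the $t$-th pivot block chosen at step $k_t$.

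Third, I apply Hadamard's inequality (Theorem~\ref{Th:Hadamard Inequality}) to the leading $K\times K$ principal submatrix of $\Pi A\Pi^T$, where $K$ is a pivot boundary so that the submatrix admits its own $LDL^T$ factorization using exactly the first few pivot blocks. Since $L$ is unit lower triangular, the determinant equals the product of pivot determinants; combined with Step~2 this yields the master inequality
\[
\prod_{t=1}^T \left(\frac{\mu_{k_t}}{\mu_1}\right)^{s_t} \le c^{-K}\prod_{t=1}^T \bigl(2(n-k_t)+1\bigr)^{s_t/2}
\]
for each pivot boundary $K = 1 + \sum_{i\le T} s_i$, where $\mu_1$ on the right absorbs the $K$ column-norm factors from Hadamard.

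Finally, I extract from this aggregate product bound the pointwise bound on $\rho_{\textrm{col}} = \max_k \mu_k/\mu_1$ claimed by the theorem, which is the main technical obstacle. Following Wilkinson's strategy, I plan to iterate the master inequality along a geometric subsequence of $K$-values, roughly $K,\lceil K/2\rceil,\lceil K/4\rceil,\ldots,1$, and at each level compare the two master inequalities (at consecutive halving levels), using Hadamard on a single pivot block to provide the complementary upper bound $|\det D_t|\le \mu_{k_t}^{s_t}$. Each level will yield a multiplicative factor of order $\sqrt{2(n+2)}/c$ on the ratio $\mu_K/\mu_{\lceil K/2\rceil}$; telescoping over the $O(\ln n)$ halving levels then produces the stated bound $(\sqrt{2}/c)^{3+\ln(n-1)}(\sqrt{n+2})^{2+\ln(n-1)}$. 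Two bookkeeping points remain: $2\times 2$ pivots may straddle a halving boundary (handled by rounding to the nearest pivot boundary at an $O(1)$ cost absorbed into the additive constants $3$ and $2$ in the exponents), and $\mu_K$ for non-pivot-boundary $K$ is controlled by a one-step Schur-complement update bound, giving $\mu_{K+1}=O(\mu_K)$ within a $2\times 2$ block.
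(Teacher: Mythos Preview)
Your first three steps are sound and align with the paper's argument (the paper uses the slightly looser denominator $\sqrt{n-k+2}$ in place of your $\sqrt{2(n-k)+1}$, which is immaterial). The genuine gap is step~4. Your master inequality is only the instance $k=1$ of what is actually needed: the paper applies Hadamard not to the leading $K\times K$ block of $\Pi A\Pi^T$ but to the leading block of each \emph{Schur complement} $A^{(k)}$, and then uses Lemma~\ref{Le:Randomized Norm Preservation} to bound every column of $A^{(k)}$ by $c_k/c$, where $c_k=\|A^{(k)}(:,\pi_k)\|_2$. This yields, for every pair $1\le k\le m\le n$,
\[
\prod_{j=k}^m \frac{c_j}{\sqrt{n-j+2}} \;\le\; \left(\frac{1}{t(\alpha)}\sqrt{\frac{1+\epsilon}{1-\epsilon}}\right)^{m-k+1} c_k^{\,m-k+1}.
\]
Your step~3 records only the $k=1$ family, and product bounds $\prod_{j\le K}r_j\le C^K$ alone cannot control $\max_K r_K$ (take $r_1=1$, $r_2=\eta$, $r_3=C^3/\eta$ with $\eta\to 0$). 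Even with the full family, the halving you sketch bounds only the \emph{geometric mean} of $c_{m/2+1},\ldots,c_m$ in terms of $c_{m/2}$, never $c_m$ itself; the auxiliary bound $|\det D_t|\le\mu_{k_t}^{s_t}$ just trades one product for another. This is also not Wilkinson's strategy.

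What the paper (following Wilkinson) actually does is take logarithms $q_j=\ln c_j$ so the displayed inequality becomes linear, then divide the $k$-th inequality by $(m-k)(m-k+1)$ for $2\le k\le m-1$ and by $m-1$ for $k=1$, and sum over $k=1,\ldots,m-1$. The partial-fraction identity $\tfrac{1}{(m-k)(m-k+1)}=\tfrac{1}{m-k}-\tfrac{1}{m-k+1}$ makes the coefficient of each intermediate $q_j$ on the left equal its coefficient on the right, so everything cancels except $q_m$ on the left and $q_1$ on the right. The surviving constants involve $\sum_{j=1}^{m-1}1/j\le 1+\ln(m-1)$, producing the exponent $2+\ln(n-1)$, and one further factor of $\sqrt{(1+\epsilon)/(1-\epsilon)}$ in passing from $c_m/c_1$ to $\rho_{\textrm{col}}$ gives the $3+\ln(n-1)$. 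Replace the halving heuristic with this weighted telescoping sum and the stated bound follows directly.
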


\begin{proof} Following Wilkinson \cite{wilkinson1961error}, our approach is to establish a recursive relationship among the diagonal entries of the $D$ matrix using the Hadamard's inequality (Theorem \ref{Th:Hadamard Inequality}). The upper bound on $\rho_{\textrm{col}}$ is then obtained by solving this recursion. 

Similar to the element growth factor analysis for Bunch-Parlett algorithm \cite{bunch1971analysis}, we adopt the following notation: 
\begin{equation*}
pivot[k] \stackrel{def}{=}
\left\{ 
\begin{array}{lll}
1, \:1 \times 1 ~\mbox{pivot} ~ \mbox{in} ~ A^{(k)}, \left( ~ \left| a_{k k}^{(k)} \right| \ge \alpha \, \lambda^{(k)} ~ \mbox{or} ~ \left| a_{r r}^{(k)} \right| \ge \alpha \, \lambda^{(k)} > \left| a_{k k}^{(k)} \right| \right), \\
\\
2, \:2 \times 2 ~\mbox{pivot} ~ \mbox{in} ~ A^{(k)}, \left( ~  \left| a_{k k}^{(k)} \right|, \left| a_{r r}^{(k)} \right| < \alpha \, \lambda^{(k)} \right),  \\
\\
0, \:2 \times 2 ~\mbox{pivot}~ \mbox{in} ~ A^{(k - 1)}, \left( ~ \left| a_{k-1, k-1}^{(k - 1)} \right|, \left| a_{r r}^{(k - 1)} \right| < \alpha \, \lambda^{(k-1)}\right).
\end{array}
\right.
\end{equation*}

$$v^{(k)} \stackrel{def}{=} \left| \det \left( \begin{array}{cc}
a_{k k}^{(k)} & a_{r k}^{(k)} \\
a_{r k}^{(k)} & a_{r r}^{(k)}
\end{array}
\right) \right| = \left| a_{k k}^{(k)} \, a_{r r}^{(k)} - \left( a_{r k}^{(k)} \right)^2 \right|.$$

$$
p_k \stackrel{def}{=} \left\{ \begin{array}{ll}
\mbox{$\left|a_{k k}^{(k)}\right|$ or $\left|a_{r r}^{(k)}\right|$} & \mbox{if $pivot[k] = 1,$} \\
\sqrt{v^{(k)}} & \mbox{if $pivot[k] = 2,$} \\
\sqrt{v^{(k-1)}} & \mbox{if $pivot[k] = 0.$}
\end{array}
\right.
$$

Therefore, for $k \le m \le n$,
\begin{equation}\label{Eq:determinant value of A}
\left| \det \left( A^{(k)}(k : m, k : m) \right) \right| = \prod_{j = k}^m p_j.
\end{equation}

Let $c_k \stackrel{def}{=} \left\| A^{(k)} ( : , \pi_k) \right\|_2$ be the 2-norm of the pivot column of $A^{(k)}$, where $\pi_k = {\bf argmax}_{k \le j \le n} \left\| \Omega^{(k)} A^{(k)}( : , j) \right\|_2$. When $pivot[k] = 0$, $c_k = c_{k - 1}$. For each pivot possibility, we deduce a lower bound for $p_k$.

{\bf Case (1):}
$pivot[k] = 1$.
$$
p_k \ge {\bf max}\left\{\alpha \, \lambda^{(k)}, \left|a_{kk}^{(k)}\right|\right\} \ge {\bf max}\left\{\alpha \, \lambda^{(k)}, \alpha \, \left|a_{kk}^{(k)}\right|\right\} \ge \frac{\alpha}{\sqrt{n-k+1}} \, c_k,
$$
where the first inequality holds because of the definition of $p_k$ when $pivot[k] = 1$, and the second inequality holds because of the fact that $0 < \alpha < 1$, and the third inequality holds because of a simple property of the entry with the largest magnitude in the pivot column of $A^{(k)}$.

{\bf Case (2):}
$pivot[k] = 2$.
$$
p_k = \sqrt{v^{(k)}} 
= \sqrt{\left| \left( a_{r k}^{(k)} \right)^2 - a_{k k}^{(k)} \, a_{r r}^{(k)} \right|}
\ge \sqrt{1 - \alpha^2} \, \lambda^{(k)} \ge \frac{\sqrt{1 - \alpha^2}}{\sqrt{n - k + 1}} \, c_k.
$$
where the first inequality holds because $\left|a_{r k}^{(k)}\right| = \lambda^{(k)}, \left|a_{k k}^{(k)}\right| < \alpha \, \lambda^{(k)} \mbox{ and } \left|a_{r r}^{(k)}\right| < \alpha \, \lambda^{(k)}.$

{\bf Case (3):}
$pivot[k] = 0$.
\begin{eqnarray*}
p_k &=& \sqrt{v^{(k - 1)}} 
= \sqrt{\left| \left( a_{r,k-1}^{(k-1)} \right)^2 - a_{k-1 ,k-1}^{(k-1)} a_{r r}^{(k-1)} \right|} \\
&\ge& \frac{\sqrt{1 - \alpha^2}}{\sqrt{n - k + 2}} c_{k-1} = \frac{\sqrt{1 - \alpha^2}}{\sqrt{n - k + 2}} c_k.
\end{eqnarray*}
since $\left|a_{r,k-1}^{(k-1)}\right| = \lambda^{(k-1)}, \left|a_{k-1,k-1}^{(k-1)}\right| < \alpha \, \lambda^{(k-1)} \mbox{ and } \left|a_{r r}^{(k-1)}\right| < \alpha \, \lambda^{(k-1)}.$

Let $t(\alpha) \stackrel{def}{=} {\bf min}\left(\alpha, \; \sqrt{1 - \alpha^2} 
\right)$, then all the above three cases deduce to  
$$
p_k \ge \frac{t(\alpha)}{\sqrt{n - k + 2}} \, c_k.
$$ 

From equation \eqref{Eq:determinant value of A},
\begin{align}\label{Eq:lower bound of column growth factor}
\left| \det \left( A^{(k)}(k : m, k : m) \right) \right|
\ge \prod_{j = k}^m \frac{t(\alpha)}{\sqrt{n - j + 2}} \, c_j = t(\alpha)^{m - k + 1} \, \prod_{j = k}^m \frac{c_j}{\sqrt{n - j + 2}}.
\end{align}

We can also obtain an upper bound of the determinant of $A^{(k)}(k : m, k : m)$ by applying Hadamard's inequality (Theorem \ref{Th:Hadamard Inequality}), that is 
\begin{align}\label{Eq:upper bound of column growth factor}
\left| \det \left( A^{(k)}(k : m, k : m) \right) \right| & \le \prod_{j = k}^m \left\| A^{(k)} (k : m, j) \right\|_2 \le \prod_{j = k}^m \left\| A^{(k)}( : , j) \right\|_2   \notag \\
& \le \left( \sqrt{\frac{1 + \epsilon}{1 - \epsilon}} \right)^{m - k + 1} \left\| A^{(k)} ( : , \pi_k) \right\|_2^{m - k + 1} \notag \\
& = \left( \sqrt{\frac{1 + \epsilon}{1 - \epsilon}} \right)^{m - k + 1} \, c_k^{m - k + 1},
\end{align}
where the second inequality holds because the $2$-norm of the $j$-th column of the Schur complement $A^{(k)}$ is greater than or equal to the $2$-norm of the first few entries of the $j$-th column of the Schur complement $A^{(k)}$. The third one holds because of Lemma \ref{Le:Randomized Norm Preservation}. Combining inequalities \eqref{Eq:lower bound of column growth factor} and \eqref{Eq:upper bound of column growth factor} together for $\left| \det\left(A^{(k)} (k : m, k : m) \right) \right|$, we obtain that 
\begin{align*}
\prod_{j = k}^m \frac{c_j}{\sqrt{n - j + 2}} \le \left( \frac{1}{t(\alpha)}\, \sqrt{\frac{1 + \epsilon}{1 - \epsilon}} \right)^{m - k + 1} c_k^{m - k + 1}.
\end{align*}

For all $1\le k \le m \le n$, we take logarithms on both sides and define
$$
q_j \stackrel{def}{=} \ln(c_j),
$$
the above inequality becomes a linear recursive inequality, 
$$
 \sum_{j = k + 1}^m q_j \le (m - k + 1) \, \ln \left(\frac{1}{t(\alpha)}\,  \sqrt{\frac{1 + \epsilon}{1 - \epsilon}} \right) + \sum_{j = k}^m \ln \left( \sqrt{n - j + 2} \right) + (m - k) \, q_k.
$$
\noindent Below we derive an upper bound on $q_m$ by solving this recursion. 

Dividing both sides by $(m - k)(m - k + 1)$ for $k = 2, 3, \cdots, m - 1$, and by $m-k$ for $k=1$ and adding all $m-1$ equations, on observing that
$$
\frac{1}{m - 1} + \frac{1}{(m - 2)(m - 1)} + \cdots + \frac{1}{(m - k)(m - k + 1)} = \frac{1}{m - k},
$$
we obtain
\begin{eqnarray}
q_m  & \le & \left( 1 + \sum_{j = 1}^{m - 1} \frac{1}{j}  \right) \ln \left(\frac{1}{t(\alpha)}\, \sqrt{\frac{1 + \epsilon}{1 - \epsilon}} \right)  \notag    \\
&& + \ln \left( \sqrt{n - m + 2} \right) + \sum_{j = 1}^{m-1} \frac{1}{j} \, \ln \left( \sqrt{n - m + 2 + j} \right) + q_1 \nonumber \\
& \leq & \left( 1 + \sum_{j = 1}^{m - 1} \frac{1}{j}  \right) \left(\ln \left(\frac{1}{t(\alpha)}\, \sqrt{\frac{1 + \epsilon}{1 - \epsilon}} \right) + \ln \sqrt{n +1}\right) + \ln \left( \sqrt{n + 2} \right) + q_1\label{Eq:Inequality between qm and q1} 
\end{eqnarray}
\noindent Since
\begin{equation}\label{Eq:Upper bound of reciprocal sum}
\sum_{j = 1}^{m - 1}\frac{1}{j} \le 1 + \ln(m - 1),
\end{equation}
Plugging this into \eqref{Eq:Inequality between qm and q1}, 
\begin{align*}
q_m - q_1 & \le \left( 2 + \ln(n - 1) \right) \left(\ln \left( \frac{1}{t(\alpha)}\,\sqrt{\frac{1 + \epsilon}{1 - \epsilon}} \right) + \ln \sqrt{n +1}\right) + \ln \left( \sqrt{n + 2} \right).
\end{align*}

The upper bound above holds true for all $m$. Taking the exponential of both sides, for all $1\le m\le n$,
\begin{equation*}
\frac{c_m}{c_1} \le \sqrt{n + 2}\, \left( \frac{1}{t(\alpha)} \,\sqrt{\frac{1 + \epsilon}{1 - \epsilon}}\right)^{\left( 2 + \ln(n - 1) \right)} \,  
\left(\sqrt{n+1}\right)^{\left( 2 + \ln(n - 1) \right)}.
\end{equation*}

Now we estimate $\rho_{\textrm{col}}$ using ${\displaystyle\frac{c_m}{c_1}}$,
\begin{align}\label{Eq:Compute column growth factor}
\rho_{\textrm{col}} & = \frac{{\bf max}_m \left\| A^{(m)} \right\|_{1,2}}{\| A \|_{1,2}}  \le \sqrt{\frac{1 + \epsilon}{1 - \epsilon}} \, {\bf max}_m \frac{ \left\| A^{(m)}( : , \pi_m) \right\|_2}{\| A^{(1)} ( : , \pi_1) \|_2} =  \sqrt{\frac{1 + \epsilon}{1 - \epsilon}} \, {\bf max}_m \frac{c_m}{c_1} \notag \\
& \leq \left( \frac{1}{t(\alpha)} \,\sqrt{\frac{1 + \epsilon}{1 - \epsilon}}\right)^{\left(3 + \ln(n - 1) \right)} \,  
\left(\sqrt{n+2}\right)^{\left(2 + \ln(n - 1) \right)}.
\end{align}

The function $t(\alpha)$ achieves its maximum at $\alpha = \frac{\sqrt{2}}{2}$. With this choice of $\alpha$, equation \eqref{Eq:Compute column growth factor} becomes
\begin{align} \label{Eq: Column growth factor bound}
\rho_{\textrm{col}} & \le \left(\sqrt{\frac{2(1 + \epsilon)}{1 - \epsilon}}\right)^{\left(3 + \ln(n - 1) \right)} \,  
\left(\sqrt{n+2}\right)^{\left(2 + \ln(n - 1) \right)}.
\end{align}
\end{proof}

Corollary \ref{Cor:Element Growth Factor for RCP} below follows directly from Lemma \ref{Le:Inequality between element GF and column norm GF} and Theorem \ref{Th:Column Growth Factor for RCP}.

\begin{corollary}\label{Cor:Element Growth Factor for RCP}
(Element Growth Factor for Randomized Complete Pivoting). Under the assumptions of Theorem \ref{Th:Column Growth Factor for RCP}, the element growth factor of RCP algorithm, with $\alpha = \frac{\sqrt{2}}{2}$ in the SBKP strategy, satisfies
\begin{equation*}
   \rho_{\textrm{elem}} \le \sqrt{n}\, \left(\frac{2(1 + \epsilon)}{1 - \epsilon}\right)^{3 + \ln(n-1)} \, \left(\sqrt{n+2}\right)^{2 + \ln(n-1)}. 
   \end{equation*}
with probability at least $1-\delta$. 
\end{corollary}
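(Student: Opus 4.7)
The plan is very short because, as the author remarks, this corollary follows directly by chaining two earlier results. First, I would invoke Lemma \ref{Le:Inequality between element GF and column norm GF}, which gives the deterministic inequality $\rho_{\textrm{elem}}(A) \le \sqrt{n}\,\rho_{\textrm{col}}(A)$ for every $A \in \mathbb{R}^{n \times n}$. This step is pointwise and carries no probabilistic content, so all the randomness is inherited from the next step.

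Next, I would plug in the bound from Theorem \ref{Th:Column Growth Factor for RCP}, which, under the stated hypotheses on $p$ and $\alpha = \sqrt{2}/2$, guarantees
\[
\rho_{\textrm{col}} \le \left(\sqrt{\tfrac{2(1+\epsilon)}{1-\epsilon}}\right)^{3+\ln(n-1)}\left(\sqrt{n+2}\right)^{2+\ln(n-1)}
\]
with probability at least $1-\delta$. Multiplying by $\sqrt{n}$ and using $\sqrt{x}\le x$ for $x \ge 1$ to fold the square root on the $\tfrac{2(1+\epsilon)}{1-\epsilon}$ factor into the form stated in the corollary produces exactly the claimed inequality. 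No union bound or further probabilistic argument is needed because Theorem \ref{Th:Column Growth Factor for RCP} already supplies the full failure probability budget $\delta$.

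The only non-routine step is reconciling the exponents: Theorem \ref{Th:Column Growth Factor for RCP} has $\sqrt{\tfrac{2(1+\epsilon)}{1-\epsilon}}$ raised to the $3+\ln(n-1)$ power, whereas the corollary keeps the same exponent but squares the base. Since $\tfrac{2(1+\epsilon)}{1-\epsilon} \ge 1$ for every $0 < \epsilon < 1$, this amounts to weakening the bound by an innocuous factor and requires no additional work. That is the only place where I need to be mildly careful; everything else is substitution.

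Thus the entire proof is a two-line derivation: apply Lemma \ref{Le:Inequality between element GF and column norm GF}, then substitute Theorem \ref{Th:Column Growth Factor for RCP}. There is no real obstacle; the work of establishing the high-probability bound has already been done in Theorem \ref{Th:Column Growth Factor for RCP}, and the corollary is simply the element-wise translation of the column-norm statement through the cheap $\sqrt{n}$ norm-equivalence.
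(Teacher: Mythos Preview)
Your proposal is correct and matches the paper's approach exactly: the paper simply states that the corollary follows directly from Lemma~\ref{Le:Inequality between element GF and column norm GF} and Theorem~\ref{Th:Column Growth Factor for RCP}, which is precisely the two-line chaining you describe. Your observation about the exponent discrepancy---that the corollary replaces $\sqrt{2(1+\epsilon)/(1-\epsilon)}$ by $2(1+\epsilon)/(1-\epsilon)$ via the harmless weakening $\sqrt{x}\le x$ for $x\ge 1$---is a correct reading of how the stated bound is obtained.
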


To compare our upper bound on $\rho_{\textrm{elem}}$ with 
element growth upper bound for GECP on a general $n\times n$ non-symmetric matrix, recall from Wilkinson \cite{wilkinson1961error} that 
\begin{equation*}
\rho^{\textrm{gecp}}_{\textrm{elem}} \le \sqrt{n} f(n) = \sqrt{n} \left( 2 \cdot 3^{\frac{1}{2}} \cdots n^{\frac{1}{n - 1}} \right)^{1/2} \le 2 n^{1/2} n^{\frac{1}{4} \ln(n)}.
\end{equation*}
\noindent While our upper bound on $\rho_{\textrm{elem}}$ is much larger than that for $\rho^{\textrm{gecp}}_{\textrm{elem}}$, they are comparable in that the dominant factor in both upper bounds is $n^{\ln(n)}$, which for large $n$ is much less than $2^{n-1}$, the attainable element growth upper bound for Gaussian elimination with partial pivoting. For diagonal pivoting methods, Bunch and Parlett \cite{bunch1971direct} proved that the element growth factor is bounded above by $(2.57)^{n-1}$ for their algorithm. In 1977, Bunch and Kaufman \cite{bunch1977some} proposed a partial pivoting strategy to solve symmetric indefinite linear systems, and showed that the element growth factor is also bounded above by $(2.57)^{n - 1}$. On the other hand, Bunch \cite{bunch1971analysis} gave an element growth bound $3nf(n)$ for the Bunch-Parlett algorithm, a symmetric complete pivoting algorithm.  

\subsection{Finite precision analysis}
This section is devoted to the analysis of the RCP algorithm in finite precision. First we give some background on finite precision analysis. We follow the notation in \cite{higham2002accuracy}
\begin{equation*}
    fl(x \, op \, y) = (x \, op \, y)(1+\delta), \quad |\delta| \le u, \quad op = +,-,\times, /,
\end{equation*}
where $u$ is the unit roundoff error. We define constant
\begin{equation*}
    \gamma_n \stackrel{def}{=} \frac{n \, u}{1 - n \, u} = n \, u + O(u^2), \qquad \mbox{with} \quad n \, u < 1.
\end{equation*}

For matrix multiplication \cite{higham2002accuracy},
\begin{equation*}
    \left| fl(A \, B) - A \,B \right| \le \gamma_n |A| \, |B|, \quad \mbox{where} \quad A \in \mathbb{R}^{m \times n}, \quad B \in \mathbb{R}^{n\times q}.
\end{equation*}

Below is our theoretical result on finite precision analysis of random projection for RCP.
\begin{theorem} \label{Thm:finite precision analysis}
For Randomized complete pivoting method with SBKP strategy, and let $\tau(n, \epsilon)=12 \left( 1+\sqrt{\frac{1+\epsilon}{1-\epsilon}}\sqrt{n} \right)+1$, $0 < \epsilon < 1$, for $1\le k \le N-2$, we have
\begin{equation*}
    \left\|fl\left(B^{(k+1)}\right)-B^{(k+1)}\right\|_{1,2} 
 \le u \sqrt{p\left(1+\epsilon\right)} \left(\sqrt{n^3(1+\epsilon)} + 
 k \rho_{col} \tau(n, \epsilon)\right) \left\|A\right\|_{1,2} + O(u^2)
\end{equation*}
with probability at least $1-n(n+1) \exp{\left(-\frac{(\epsilon^2-\epsilon^3) p}{4}\right)} - \exp{\left(-\frac{\epsilon^2 \, p \, n}{2} \right)}$.
\end{theorem}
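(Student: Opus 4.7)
The plan is to unroll the recursion implied by the update formula \eqref{Eq:Update matrix B} and accumulate rounding errors stage by stage, splitting the total error into two contributions: (a) the error committed in forming the initial projection $B^{(1)} = \Omega A$, and (b) the errors committed in each of the $k$ subsequent updates of the form $B^{(j+1)} = \widetilde{B}_2 - \widetilde{B}_1\widetilde{L}_{21}^T$.

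First I would bound the initial term. The standard backward error bound for matrix multiplication gives
\[
\bigl\|fl(\Omega A) - \Omega A\bigr\|_{1,2} \le \gamma_n\,\bigl\||\Omega|\,|A|\bigr\|_{1,2} \le n\,u\,\|\Omega\|_F\,\|A\|_{1,2}.
\]
Since $\|\Omega\|_F^2$ is chi-square with $pn$ degrees of freedom, a standard tail bound yields $\|\Omega\|_F \le \sqrt{(1+\epsilon)\,p\,n}$ with probability at least $1 - \exp(-\epsilon^2 pn/2)$. Substituting produces the $u\sqrt{p(1+\epsilon)}\sqrt{n^3(1+\epsilon)}\|A\|_{1,2}$ summand and accounts for the second failure factor in the theorem.

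Next I would analyse the per-stage update. Setting $E_j \stackrel{def}{=} fl(B^{(j+1)}) - B^{(j+1)}$ and expanding the update in floating point yields, to first order in $u$, a recursion of the shape
\[
\|E_{j+1}\|_{1,2} \le \|E_j\|_{1,2} \;+\; \bigl\|fl(\widetilde{B}_1^{(j)}\widetilde{L}_{21,j}^T) - \widetilde{B}_1^{(j)}\widetilde{L}_{21,j}^T\bigr\|_{1,2} + O(u^2),
\]
in which the first term is the propagated error and the second is the fresh multiplication error, bounded by $\gamma_{s+1}\bigl\||\widetilde{B}_1^{(j)}|\,|\widetilde{L}_{21,j}^T|\bigr\|_{1,2}$. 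Using $\|XY\|_{1,2}\le\|X\|_{1,2}\|Y\|_1$ from Section \ref{Sec:Preliminaries and Background}, this last quantity is at most a universal constant times $u\,\|\widetilde{B}_1^{(j)}\|_{1,2}\,\max_{i,l}|L_{21,j}(i,l)|$. Lemma \ref{Le:Randomized Norm Preservation} controls the first factor by $\sqrt{p(1+\epsilon)}\,\|A^{(j)}\|_{1,2} \le \sqrt{p(1+\epsilon)}\,\rho_{\textrm{col}}\,\|A\|_{1,2}$, while Theorem \ref{Thm:Upper Bounds on L entries}, with $\alpha=\sqrt{2}/2$ (so that $\min(\alpha,1-\alpha^2)=1/2$), bounds every $L$ entry by $2\bigl(1+\sqrt{(1+\epsilon)/(1-\epsilon)}\,\sqrt{n}\bigr)$. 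Bundling the remaining universal constants into $\tau(n,\epsilon)$ and telescoping the recursion for $j = 1,\ldots,k$ produces the $k\,\rho_{\textrm{col}}\,\tau(n,\epsilon)$ prefactor in the statement.

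Finally I would combine the initial and per-stage contributions and take a union bound over the event that every column pivot at every stage satisfies the $\epsilon$-JL condition, which provides the $1 - n(n+1)\exp(-(\epsilon^2-\epsilon^3)p/4)$ factor via Lemma \ref{Le:Randomized Norm Preservation}, and the independent event that $\|\Omega\|_F$ concentrates, providing the $1-\exp(-\epsilon^2 pn/2)$ factor. The principal obstacle is the bookkeeping needed to keep the recursion additive rather than multiplicative: one must replace floating-point iterates with their exact counterparts via the inductive error bound at the expense of absorbed $O(u^2)$ cross-terms, and one must verify that the $\rho_{\textrm{col}}$ appearing in the bound is the same high-probability quantity already controlled by Theorem \ref{Th:Column Growth Factor for RCP}, so that no additional failure probability need be paid.
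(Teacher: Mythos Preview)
Your proposal is essentially the same approach as the paper's proof: bound the initial projection error via $\|fl(\Omega A)-\Omega A\|_{1,2}\le nu\,\|\Omega\|_F\|A\|_{1,2}$ and control $\|\Omega\|_F$ by Gaussian concentration; unroll the update recursion $B^{(j+1)}=\widetilde B_2-\widetilde B_1 L_j^T$; bound each $\|B^{(i)}\|_{1,2}$ by $\sqrt{p(1+\epsilon)}\,\rho_{\textrm{col}}\|A\|_{1,2}$ via the $\epsilon$-JL event of Lemma~\ref{Le:Randomized Norm Preservation}; bound $\|L_i^T\|_1$ via Theorem~\ref{Thm:Upper Bounds on L entries} with $\alpha=\sqrt2/2$; and collect probabilities by a union bound.

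Two small points where the paper is more careful than your sketch. First, your per-stage recursion omits the subtraction error: computing $\widetilde B_2-\mathrm{fl}(\widetilde B_1 L_j^T)$ in floating point contributes an extra $u\,|B_2^{(j)}|$ term, which after telescoping produces the ``$+1$'' in $\tau(n,\epsilon)$; the paper's recursion carries this term explicitly. Second, the paper bounds $\|L_i^T\|_1$ (not merely $\max_{i,l}|L_{ij}|$), picking up an extra factor of $2$ because a $2\times2$ pivot contributes two columns; together with the constant $3$ in front of the multiplication error this yields the $12$ in $\tau$. Both are exactly the ``bookkeeping'' you flagged, so your plan goes through.
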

\begin{proof}
Without loss of generality, we will assume for this section only that the permutation matrix $\Pi = I$. In the case of a non-identity $\Pi$, simply apply the same analysis to $\Pi A \Pi^T$. Assume that we need to do $N$ steps to obtain the block $LDL^T$ factorization of $A$ and the block size used in each step is $s_i$ where $s_i \in \{1,2\}$ $(1 \le i \le N)$, the block $LDL^T$ factorization of $A$ can be written as 
\begin{align*}
A = \begin{pmatrix}
I &  & & \\
L_{21} & I & & \\
\vdots & \vdots  &\ddots & \\
L_{N1} & L_{N2} & \cdots & I 
\end{pmatrix} \begin{pmatrix}
D_1  &  &  &  \\
&  D_2 &  &  \\
&  &  \ddots &  \\
&   &   &   D_N
\end{pmatrix} \begin{pmatrix}
I & L_{21}^T  & \cdots &  L_{N1}^T \\
& I  &  \cdots &  L_{N2}^T  \\
&   &  \ddots  & \vdots \\
&   &   & I
\end{pmatrix} 
.
\end{align*}

Define $L_k \stackrel{def}{=} \begin{pmatrix}
L_{k+1,k}^T, L_{k+2,k}^T, \cdots, L_{N,k}^T
\end{pmatrix}^T$ ($1 \le k \le N-1$).
To begin, since $B^{(1)} = \Omega^{(1)} \, A^{(1)} = \Omega \, A$, then
$$\left|fl\left(B^{(1)}\right) - B^{(1)} \right| \le \gamma_n |\Omega| |A| = n \, u \, |\Omega| |A| + O(u^2).$$

For $k=1,2,\dots,N-2$, the update formula is $B^{(k+1)} = B_2^{(k)} - B_1^{(k)} L_k^T$, and $\left| \left(fl\left(B_1^{(k)}L_k^T\right)-B_1^{(k)}L_k^T\right) \right| \le 2 \, u \, \left|B_1^{(k)}\right| \left|L_k^T\right| + O(u^2)$, then
\begin{eqnarray*}
&&\left|fl\left(B^{(k+1)}\right)-B^{(k+1)}\right| \\
&\le& \left(1+u\right) \left|fl\left(B_2^{(k)}\right)-B_2^{(k)}\right| + u \, \left|B_2^{(k)}\right| + 3\, u \, \left|B_1^{(k)}\right| \left|L_k^T\right| + O(u^2).
\end{eqnarray*}
furthermore, for $1 \le k \le N-2$,
\begin{eqnarray*}
\left|fl\left(B^{(k+1)}\right)-B^{(k+1)}\right| &\le& \left(1+u\right) \left|fl\left(B^{(k)}\right)-B^{(k)}\right|\left(
\begin{array}{c}
     0  \\
     I 
\end{array}
\right) + 
u \left|B^{(k)}\right|\left(
\begin{array}{c}
     0  \\
     I 
\end{array}
\right) \\
&&+ 3\, u \, \left|B^{(k)}\right| \left(
\begin{array}{c}
     I  \\
     0 
\end{array}
\right)
\left|L_k^T\right| + O(u^2).
\end{eqnarray*}

Applying a simple induction argument to the inequality above gives
\begin{align*}
&\left|fl\left(B^{(k+1)}\right)  - B^{(k+1)}\right| 
\le \left| fl \left(B^{(1)} \right)  - B^{(1)}\right|\begin{pmatrix}
0\\I
\end{pmatrix} (1+u)^{k} \\
& + u  \sum_{i=1}^{k} \left| B^{(i)} \right| \begin{pmatrix}
0\\I
\end{pmatrix} (1+u)^{k-i} + 3 u \sum_{i = 1}^{k} \left| B^{(i)} \right| \begin{pmatrix}
I\\0
\end{pmatrix} \left| L_i^T \right| (1+u)^{k-i} + O(u^2),
\end{align*}
where $1 \le k \le N-2$.

We apply the $1,2$-norm on both sides to get
\begin{align*}
    &\left\| fl\left(B^{(k+1)}\right)  - B^{(k+1)} \right\|_{1,2} \le \left\| fl \left(B^{(1)} \right)  - B^{(1)} \right\|_{1,2} \, (1+u)^{k} \\
& + u \, \sum_{i=1}^{k} \left\| B^{(i)} \right\|_{1,2} \, (1+u)^{k-i} + 3 \, u \, \sum_{i = 1}^{k} \left\| B^{(i)} \right\|_{1,2} \left\| L_i^T \right\|_1 \, (1+u)^{k-i} + O(u^2).
\end{align*}

From equation \eqref{eq:condition between B and A}, we have 
\begin{equation*}
\left\|B^{(i)}\right\|_{1,2} \le \sqrt{p(1+\epsilon)} \left\|A^{(i)}\right\|_{1,2} \le \sqrt{p(1+\epsilon)}
\rho_{col}\left\|A\right\|_{1,2},
\end{equation*}
with probability of failure bounded above by $n(n+1) \exp{\left( -\frac{(\epsilon^2-\epsilon^3) p}{4} \right)}$, and by Theorem \ref{Thm:Upper Bounds on L entries},
\begin{equation*}
\left\|L_i^T\right\|_1 \le 2 \cdot \frac{1+\sqrt{\frac{1+\epsilon}{1-\epsilon}}\sqrt{n}}{{\bf min}\left(\alpha, 1-\alpha^2\right)} = 4 \left( 1+\sqrt{\frac{1+\epsilon}{1-\epsilon}}\sqrt{n} \right),
\end{equation*}
where the second quality holds because $\alpha = \frac{\sqrt{2}}{2}$ in Subsection \ref{Sec:Analysis of growth factor of RCP}, and
\begin{equation*}
\left\|fl\left(B^{(1)}\right)-B^{(1)}\right\|_{1,2} 
\le \gamma_n \left\|\left|\Omega\right|\right\|_2 \left\|A\right\|_{1,2} \le n \, u \, \left\|\Omega\right\|_F \left\|A\right\|_{1,2} + O(u^2).
\end{equation*}

Then 
\begin{eqnarray}\label{eq:1,2 norm of B(k+1) error}
&&\left\|fl\left(B^{(k+1)}\right)-B^{(k+1)}\right\|_{1,2} \notag \\
&\le&n \, u \, \left\|\Omega\right\|_F \left\|A\right\|_{1,2}  + 
k \, u \, \sqrt{p\left(1+\epsilon\right)} \rho_{col} \left\|A\right\|_{1,2} \notag \\
&&+ 12 \, k \, u \, \sqrt{p\left(1+\epsilon\right)} \rho_{col} \left\|A\right\|_{1,2} \left( 1+\sqrt{\frac{1+\epsilon}{1-\epsilon}}\sqrt{n} \right) + O(u^2) \notag \\
& \le & n \, u \, \left\|\Omega\right\|_F \left\|A\right\|_{1,2} + 
k \, u \, \sqrt{p\left(1+\epsilon\right)} \rho_{col} \left\|A\right\|_{1,2} \, \tau(n, \epsilon) + O(u^2).
\end{eqnarray}

We observe that function $h(\Omega) = \|\Omega \|_F$ is a Lipschitz function with Lipschitz constant $L = 1$ and apply Theorem 4.5.7 in \cite{bogachev1998gaussian} to get
\begin{equation*}
    \mathbb{P}\left\{ \|\Omega \|_F \ge \sqrt{pn} + t \right\} \le e^{-\frac{t^2}{2}},
\end{equation*}
where $\mathbb{E}\|\Omega\|_F \le \sqrt{\mathbb{E}\|\Omega\|_F^2} = \sqrt{pn}$ from Proposition 10.1 from \cite{halko2011finding}. Set $t = \epsilon \sqrt{pn}$, and plug this into \eqref{eq:1,2 norm of B(k+1) error} to arrive at our desired conclusion with an union bound.
\end{proof}

\subsection{Potential for numerical instability in random projections}

According to Theorem \ref{Thm:finite precision analysis}, updating random projections $B^{(k)}$ could potentially result in large rounding errors in $B^{(k)}$ in the theoretically possible case of large element growth. On the other hand, the input matrix $A$ could itself be nearly rank deficient, leading to potentially low-quality column pivots from $B^{(k)}$ even with small rounding errors in $B^{(k)}$. A similar numerical instability discussion on QR with column pivoting can be found in~\cite{drmavc2008failure}. 

We solve the problem with the large rounding error problem by explicitly recomputing a new random projection whenever necessary. We solve the rank deficiency problem by ensuring that pivoted column norms of $B$ remain above certain threshold value. The details are contained in Algorithm \ref{alg:Fixing}. 

With Algorithm \ref{alg:Fixing}, we need to recompute random projections at most $r$ times. In practice, we can simply set a particular $r$. In our implementation, we only set $r = 1$ (Algorithm \ref{alg:BRCP}) since the need for correcting numerical instability caused by updating random projections never arose in our experiments. 

\begin{algorithm}[H]
\caption{RCP with periodic random projection direct computation}
\label{alg:Fixing}
\begin{algorithmic}
\REQUIRE
\STATE symmetric matrix $A\in \mathbb{R}^{n \times n}$, sampling dimension $p~(p = 5)$, block size $b$,
\STATE $eps$ is machine precision, 
\STATE set a $r$ value.
\ENSURE
\STATE permutation matrix $\Pi$, unit lower triangular matrix $L$,
\STATE and block diagonal matrix $D$ such that $\Pi A \Pi^T = L D L^T$. \\
\algrule
\STATE generate random matrix $\Omega$ and compute $B = \Omega A$. Define $\beta = \left\|B\right\|_{1,2}$. Set $\delta = \frac{1}{r}$.
\WHILE{$\delta < 1$}
\STATE compute the current large column norm in $B$ and denote it as $t$. 
\IF{$t >= eps^\delta \, \beta$}
\STATE this pivot is acceptable. We apply this pivot to $A$, do a block $LDL^T$ factorization with SBKP strategy, update the $B$ matrix.
\ELSE
\STATE this pivot is unacceptable. We generate a new random matrix and compute a new random projection of the current Schur complement. Update $\delta = \delta + \frac{1}{r}$.
\IF{$t < eps^\delta \, \beta$}
\STATE the input matrix $A$ is rank deficient and the current Schur complement is close to zero matrix. Quit the while loop and return the computed $L$, $D$ and $\Pi$. 
\ELSE 
\STATE this pivot is acceptable. We apply this pivot to $A$, do a block $LDL^T$ factorization with SBKP strategy, update the $B$ matrix.
\ENDIF
\ENDIF
\STATE the matrix $A$ is either factorized already or the Schur complement of $A$ is close to zero matrix. 
\ENDWHILE
\end{algorithmic} 
\end{algorithm}

\section{Numerical Experiments}\label{Sec:Numerical experiment}
All experiments were run on a single 24-core node of the NERSC machine Edison. Subroutines were compiled using Cray Scientific Library. We compared RCP algorithm with LAPACK routines DSYSV, DSYSV\_ROOK and DSYSV\_AA for computing the solution to a real symmetric system of linear equations $Ax=b$. DSYSV uses Bunch-Kaufman (BK) algorithm, DSYSV\_ROOK uses the bounded Bunch-Kaufman (BBK) algorithm and DSYSV\_AA uses Aasen's algorithm.

We run experiments on ten different types of symmetric matrices $A\in \mathbb{R}^{n\times n}$. {\bf Type $4$} and {\bf Type $5$} matrices belong to the Matlab gallery. {\bf Type $8$} matrices are from the Higham's Matrix Computation Toolbox \cite{higham2002matrix}. {\bf Type $9$} matrices are from the University of Florida Sparse Matrix Collection \cite{davis2011university}. {\bf Type $10$} matrices are random rank-deficient matrices. All random matrices are generated using LAPACK subroutine DLARNV. For all tests, the right-hand side $b$ is chosen as $b=Ax$ with a random vector $x$. We compute relative backward error
\begin{equation} \label{Eq:Formula for BE}
\emph{err} \stackrel{def}{=} \frac{\| A \hat{x} - b \|_\infty}{\| A \|_\infty \| \hat{x} \|_\infty},
\end{equation}
where $\hat{x} $ is the computed solution. We also compute the growth factor  
\begin{equation} \label{Eq:Formula for GF of LDL}
\rho = \frac{\|D\|_{1,\infty}}{\|A\|_{1,\infty}},
\end{equation}
for all algorithms except Aasen's algorithm. For Aasen's algorithm, we compute the growth factor using an analogous definition
\begin{equation} \label{Eq:Formula for GF of LTL}
\rho = \frac{\|T\|_{1,\infty}}{\|A\|_{1,\infty}},
\end{equation}
where $T$ is the tridiagonal matrix computed by Aasen's algorithm.

\begin{itemize}
\item[{\bf Type $1$}:] $A$ is the worst-case matrix for element growth for Bunch-kaufman algorithm.
\[ A = \left(
\begin{array}{cc}
A_1 & (1 - \epsilon) I \\
(1 - \epsilon) I & O 
\end{array}
\right), \; \mbox{where} \; A_1 = \left(
\begin{array}{cccccc}
d_1 &  &  &  & 1 & 1 \\
& d_2 &  &  & 1 & 1 \\
&  & \ddots &  & \vdots & \vdots \\
&  &  & d_{n-2} & 1 & 1 \\
1 & 1 & \cdots & 1 & 1 & 1 \\
1 & 1 & \cdots & 1 & 1 & 1 
\end{array}
\right), \]
with ${\displaystyle 0 < \epsilon \ll 1, d_k = \frac{q^{1-k}}{1-q}(1 \le k \le n-2), q = 1 + \frac{1}{\alpha} \, \mbox{and} \, \alpha = \frac{1 + \sqrt{17}}{8}}$. For such matrix, Bunch-Kaufman algorithm results in exponential element growth \cite{druinsky2011growth}. 

\item[{\bf Type $2$}:] $A$ is the worst-case matrix for flops for the bounded Bunch-Kaufman algorithm.
$$
A = \left(
\begin{array}{ccccccc}
0 &  &  &  &  &  & 2 \\
& n & n &  &  &  &  \\
& n & 0 & n-1 &  &  &  \\
&  & n-1 & 0 &  &  &  \\
&  &  & \ddots & \ddots & \ddots &  \\
&  &  &  & 4 & 0 & 3 \\
2 &  &  &  &  & 3 & 0 
\end{array}
\right).
$$
For such matrix, the bounded Bunch-Kaufman algorithm results in a pivot search in the entire Schur complement in each step, leading to $O(n^3)$ extra work in comparison \cite{ashcraft1998accurate}.

\item[{\bf Type $3$}:] $A$ is a $n \times n$ Hankel matrix, i.e.,
$$
A = \begin{pmatrix}
a_1 & a_2 & a_3 & \cdots & \cdots & a_n \\
a_2 & a_3 &  &  &  & \vdots \\
a_3 &   &  &  &  & \vdots \\
\vdots &  &  &  &   & a_{2 n - 3} \\
\vdots &  &  &  & a_{2 n - 3} & a_{2 n - 2} \\
a_n & \cdots &  \cdots & a_{2 n - 3}   & a_{2 n - 2} & a_{2 n - 1},
\end{pmatrix},
$$
where $a_i \; (1 \le i \le 2 n - 1)$ are sampled independently from $\mathcal{N}(0, 1)$.

\item[{\bf Type $4$}:] $A$ is a discrete sine transform matrix of the form,
$$
A = \left(a_{ij}\right), \quad \mbox{where}  \quad a_{ij} = \sqrt{\frac{2}{n + 1}}  \sin \left( \frac{i j \pi}{n + 1} \right).
$$

\item[{\bf Type $5$}:] $A$ is a discrete cosine transform matrix of the form,
$$
A = \left(a_{ij}\right), \quad \mbox{where}  \quad a_{ij} = \cos \left( \frac{(i - 1) (j - 1) \pi}{n - 1} \right).
$$

\item[{\bf Type $6$}:] $A$ is a $n \times n$ Gaussian random matrix where the entries are sampled independently from $\mathcal{N}(0, 1)$.

\item[{\bf Type $7$}:] $A$ is a $n \times n$ KKT matrix \cite{nocedal1999springer},
$$
A = \left(
\begin{array}{cc}
A_1 & W \\
W^T & O 
\end{array}
\right),
$$
where $A_1 \in \mathbb{R}^{n_1\times n_1}$; $W \in \mathbb{R}^{n_1\times n_2}$ is a Gaussian random matrix where the entries are sampled independently from $\mathcal{N}(0, 1)$; $O \in \mathbb{R}^{n_2\times n_2}$ is a zero matrix, with $n = n_1 + n_2$.

\item[{\bf Type $8$}:] $A$ is a $n \times n$ augmented system matrix \cite{higham2002matrix},
$$
A = \left(
\begin{array}{cc}
I & W \\
W^T & O 
\end{array}
\right),
$$
where $I \in \mathbb{R}^{n_1\times n_1}$ is an identity matrix; $W \in \mathbb{R}^{n_1\times n_2}$ is a Gaussian random matrix where the entries are sampled independently from $\mathcal{N}(0, 1)$; $O \in \mathbb{R}^{n_2\times n_2}$ is a zero matrix, with $n = n_1 + n_2$.

\item[{\bf Type $9$}:] $A$ is from the University of Florida Sparse Matrix Collection \cite{davis2011university}. We chose $176$ real symmetric matrices, with sizes between $500$ and $10000$.

\item[{\bf Type $10$}:] $A$ is a $n \times n$ random rank-deficient matrix of the form,
$$
A = W \Lambda W^T,
$$
where $W \in \mathbb{R}^{n\times n}$ is a Gaussian random matrix where the entries are sampled independently from $\mathcal{N}(0, 1)$; $\Lambda = \mbox{diag}(\lambda_1,\cdots, \lambda_n)$ is a diagonal matrix with $\lambda_i = \frac{q^{1-i}}{1-q},~q=1+\sqrt{2}~(1\le i \le n)$. 
\end{itemize}

Figure \ref{Fig:Relative run time between RCP and BK} shows the relative runtime ratios of RCP algorithm and Bunch-Kaufman algorithm (Aasen's algorithm) for {\bf Type $6$} matrices. As matrix size $n$ increases, the relative run times decrease to a negligible amount. Figure \ref{Fig:Four factors for random matrices with different p} shows the values of $p$ used in RCP algorithm have little impact on growth factor, backward error, $\|L\|_1$ and $\|L^{-1}\|_1$ for {\bf Type $6$} matrices. We get the same results for other types of matrices, in terms of relative runtime ratios and impact of $p$. We choose $p=5$ in Figure \ref{Fig:worst case growth factor for BK} - \ref{Fig:Results for rank deficient matrices}.

Figure \ref{Fig:worst case growth factor for BK} shows the growth factor of the $L$ computed by BK algorithm increases exponentially and therefore the algorithm breaks down. RCP algorithm is as efficient as bounded Bunch-Kaufman algorithm and Aasen's algorithm and meanwhile very stable. In Figure \ref{Fig:Running time of the worst case for BBK}, the right figure contains all curves except bounded Bunch-Kaufman curve of the left figure, to show difference between those three methods. As the matrix size increases, the run time of bounded Bunch-Kaufman algorithm increases exponentially, while the run times of all the other three algorithms remain small.

Figure \ref{Fig:results of element growth factor} shows the growth factor and backward error for matrices of {\bf Type $3$} through {\bf $8$}. RCP algorithm is better than all the other algorithms on both growth factor and backward error. RCP algorithm is also better than BK algorithm in terms of $L$ and $L^{-1}$ factors in Figure
\ref{Fig:results of L}.

For {\bf Type $9$} matrices, backward error, growth factor, $\|L\|_1$ and $\|L^{-1}\|_1$ are shown in Figure \ref{Fig:results of UF sparse matrix}. The results computed by RCP algorithm are comparable to the other three algorithms, while RCP algorithm produces $L~(L^{-1})$ matrices with much smaller $\|L\|_1~(\|L^{-1}\|_1)$.

Figure \ref{Fig:Results for rank deficient matrices} compares the growth factor, backward error, $\|L\|_1$ and $\|L^{-1}\|_1$ for {\bf Type $10$} matrices, whose ranks are less than $55~(n \ge 100)$ and have stability solutions of the linear systems because of rounding error. The growth factor are all $1$ in these four algorithms. The backward error of RCP algorithm is comparable to other three algorithms, while $\|L\|_1~(\|L^{-1}\|_1)$ are as small as BBK algorithm and Aasen's algorithm, and much smaler than BK algorithm.

\begin{figure}[htbp]
\centering
\includegraphics[width=1.0\textwidth]{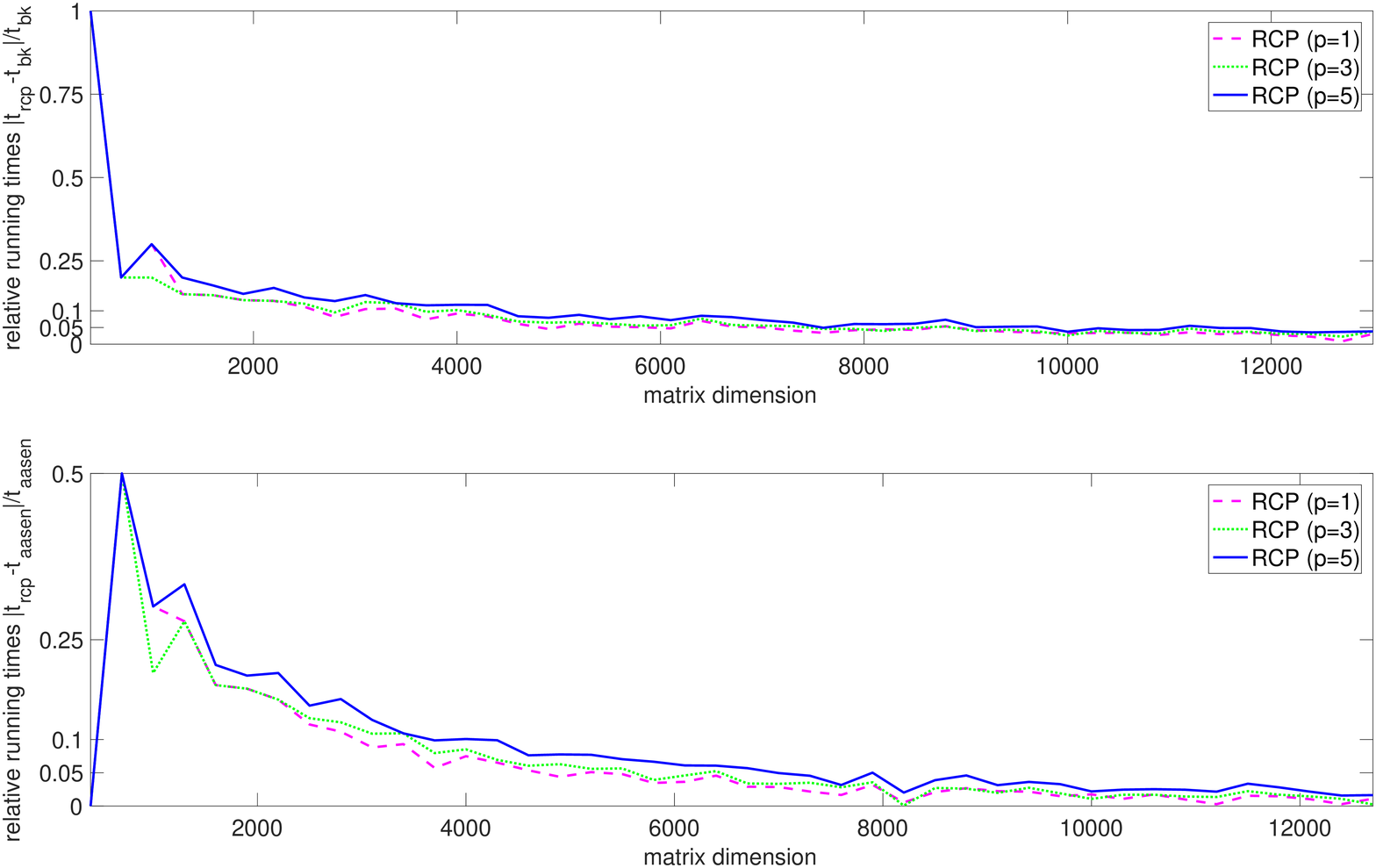}
\caption{Relative running times between RCP with three different value of $p$ and BK (Aasen)  for {\bf Type $6$} matrices.}\label{Fig:Relative run time between RCP and BK}
\end{figure}

\begin{figure}[htbp]
\centering
\includegraphics[width=1.0\textwidth]{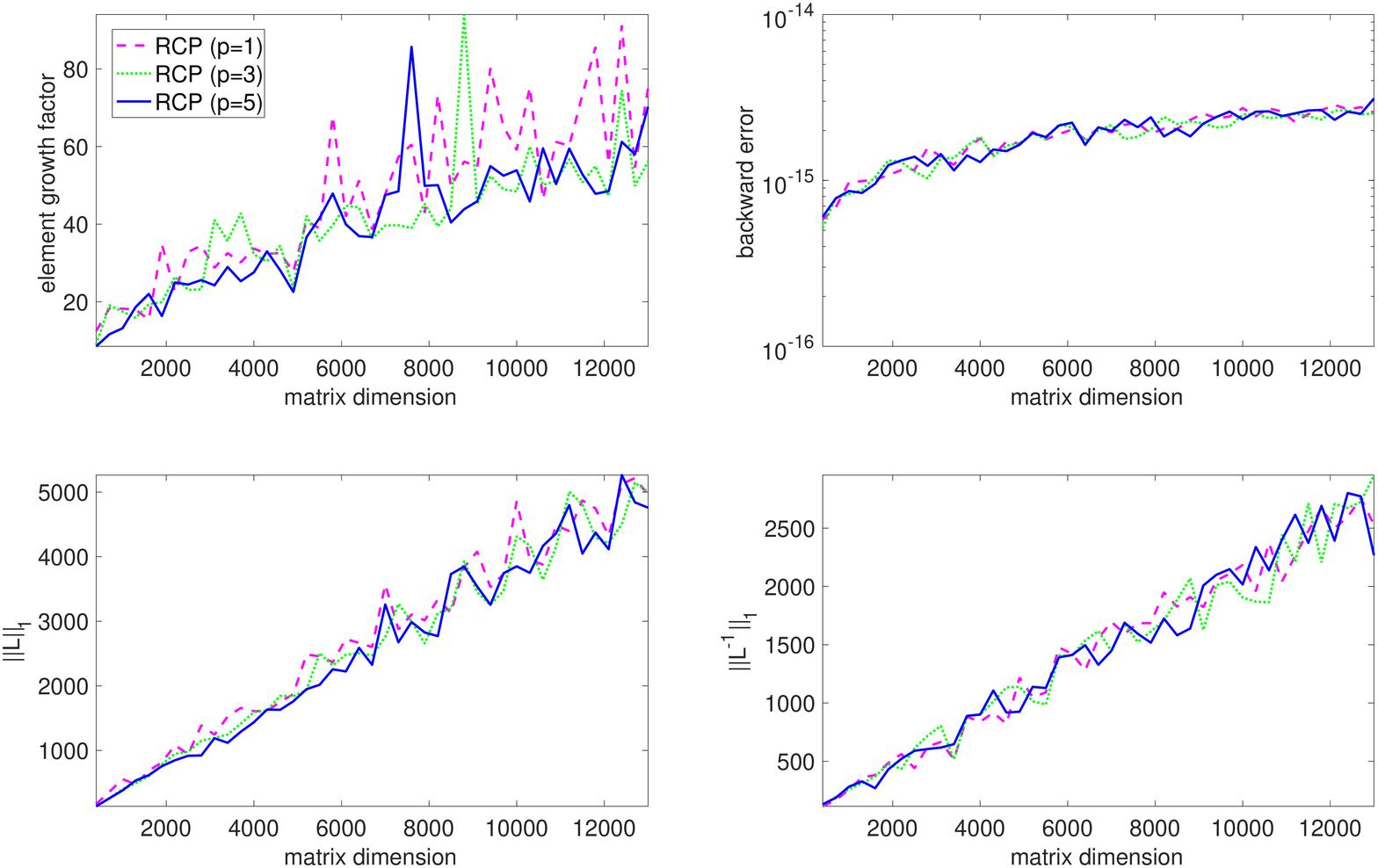}
\caption{Results for {\bf Type $6$} matrices with different $p$. The top left plot shows the growth factor $\rho$ in the factorization of $A$ as defined by \eqref{Eq:Formula for GF of LDL}, the top right one shows the backward error in the solution of $Ax=b$ as defined by \eqref{Eq:Formula for BE}, the bottom left one shows the $1$-norm of $L$, and the bottom right one shows the $1$-norm of inverse of $L$.}\label{Fig:Four factors for random matrices with different p}
\end{figure}

\begin{figure}[htbp]
\centering
\includegraphics[width=1.0\textwidth]{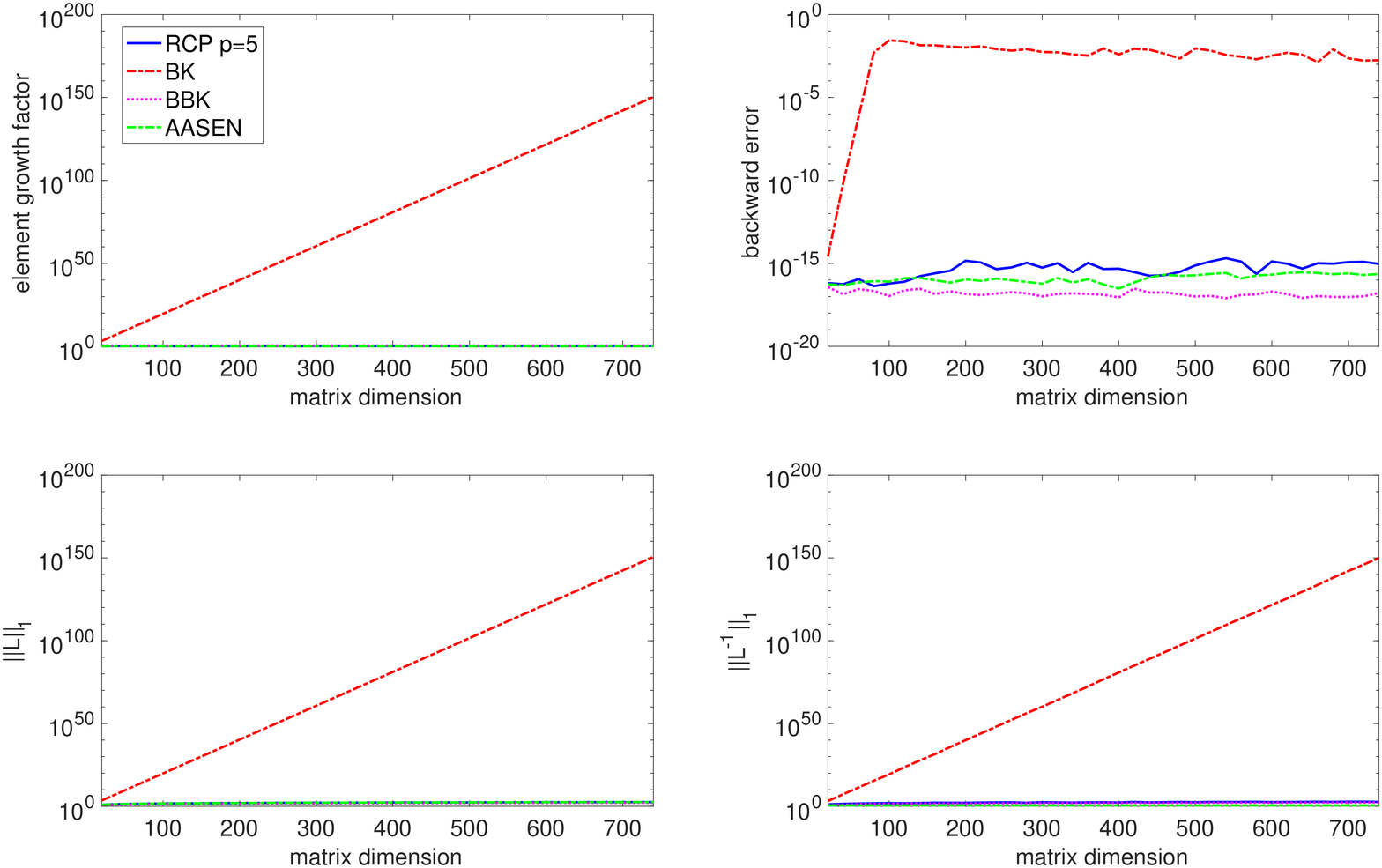}
\caption{Results for {\bf Type $1$} matrices. The top left plot shows the growth factor $\rho$ in the factorization of $A$ as defined by \eqref{Eq:Formula for GF of LDL} and \eqref{Eq:Formula for GF of LTL}, the top right one shows the backward error in the solution of $Ax=b$ as defined by \eqref{Eq:Formula for BE}, the bottom left one shows the $1$-norm of $L$, and the bottom right one shows the $1$-norm of inverse of $L$.}\label{Fig:worst case growth factor for BK}
\end{figure}

\begin{figure}[htbp]
\centering
\includegraphics[width=1.0\textwidth]{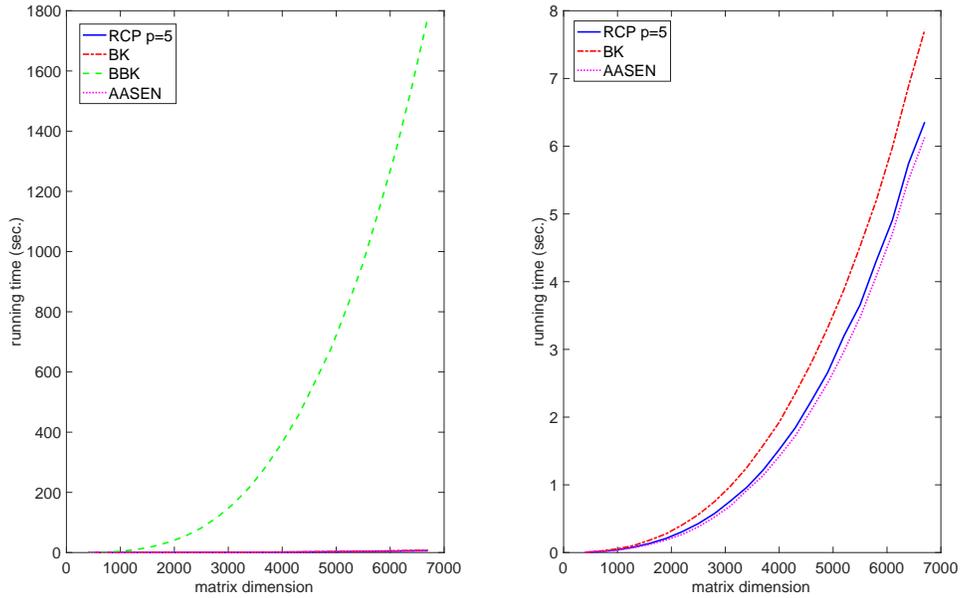}
\caption{Running times for {\bf Type  $2$} matrices.}\label{Fig:Running time of the worst case for BBK}
\end{figure}

\begin{figure}[htbp]
\centering
\includegraphics[width=1.0\textwidth]{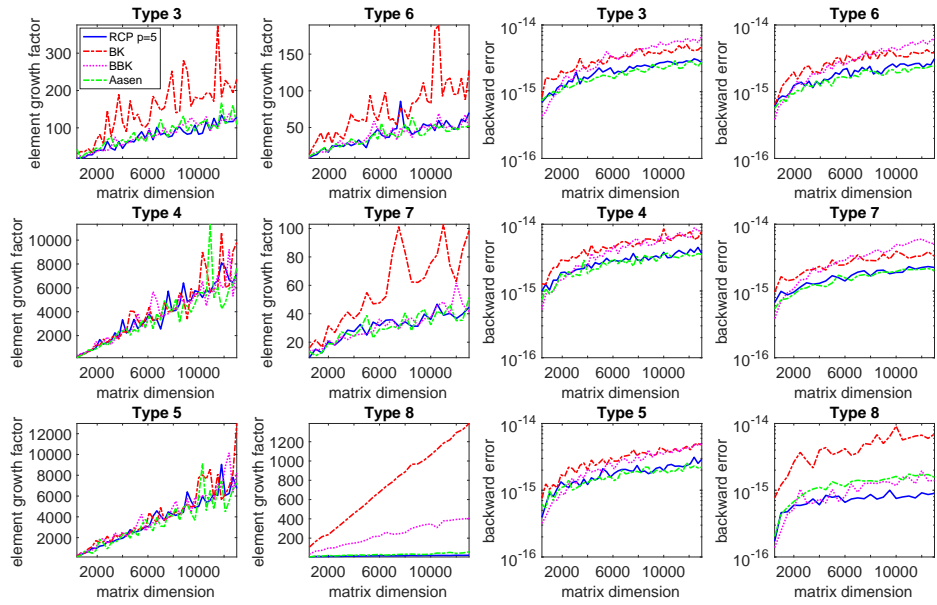}
\caption{Growth factor and backward error for matrices of {\bf Type $3$} through {\bf $8$}. The left half plots show the element growth factor in the factorization of $A$ as defined by \eqref{Eq:Formula for GF of LDL} and \eqref{Eq:Formula for GF of LTL}, and the right half ones show the backward error as defined by \eqref{Eq:Formula for BE}. Element growth factors of {\bf Type $4$}, {\bf Type $5$} and {\bf Type $8$} increase linearly, and the other three are small.}\label{Fig:results of element growth factor}
\end{figure}

\begin{figure}[htbp]
\centering
\includegraphics[width=1.0\textwidth]{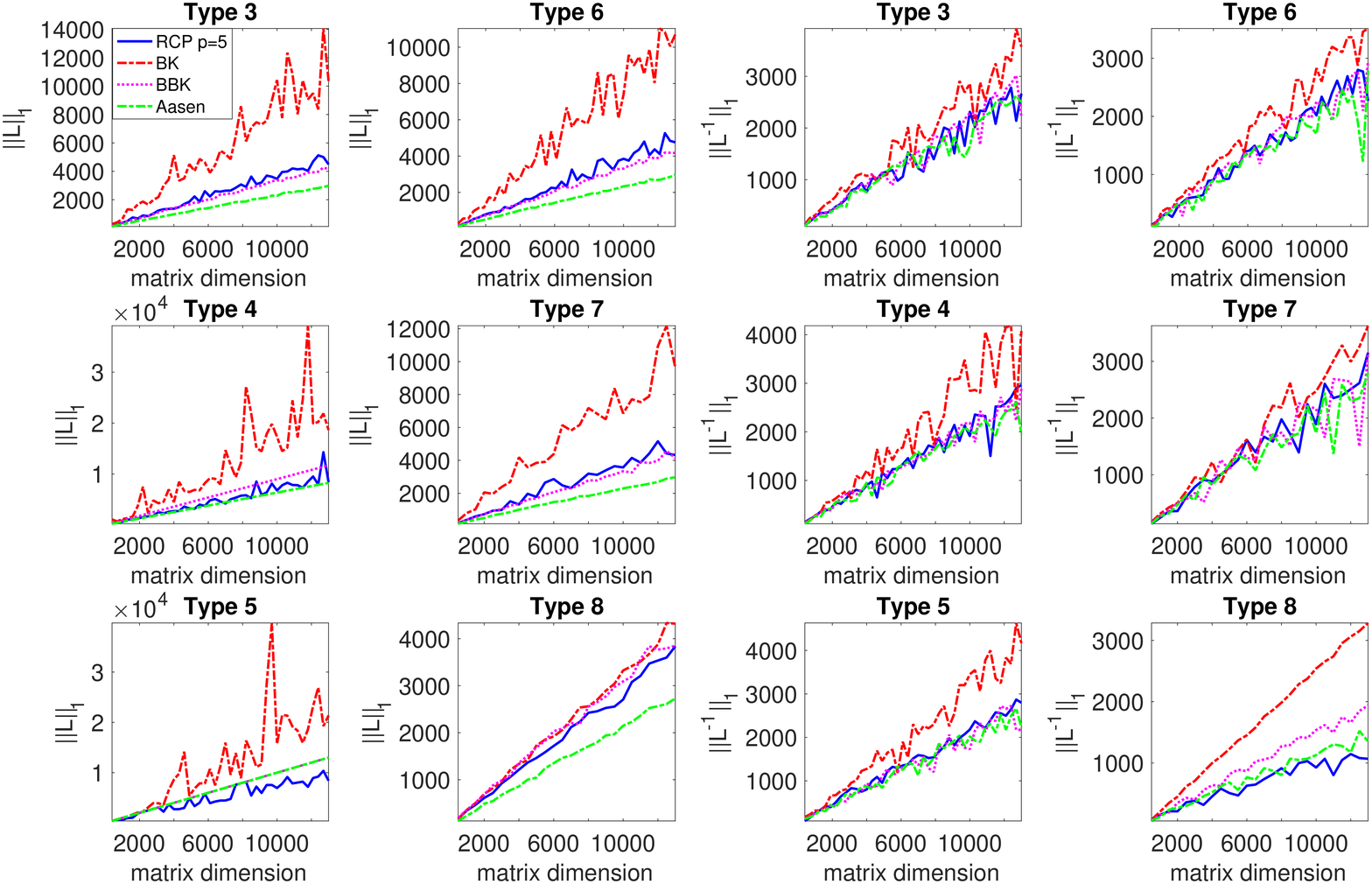}
\caption{$\|L\|_1$ and $\|L^{-1}\|_1$ for matrices of {\bf Type $3$} through {\bf $8$}. The left half plots show the values of $\|L\|_1$, and the right half ones show the value of $\|L^{-1}\|_1$. All these results increase linearly.}\label{Fig:results of L}
\end{figure}

\begin{figure}[htbp]
  \centering
  \label{Fig:results of UF sparse matrix}\includegraphics[width=1.0\textwidth]{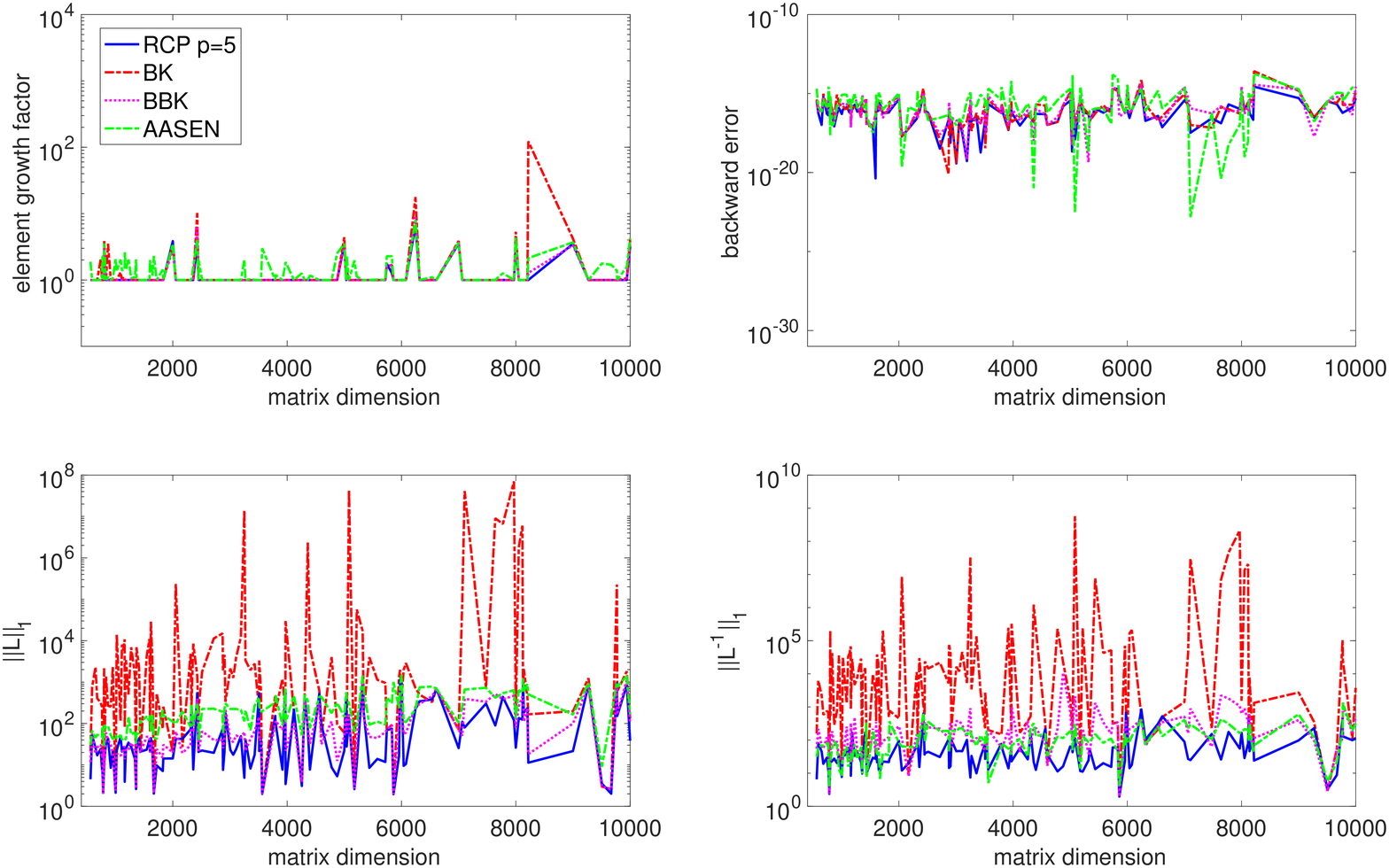}
  \caption{Results for {\bf Type $9$} matrices. The top left plot shows the growth factor $\rho$ in the factorization of $A$ as defined by \eqref{Eq:Formula for GF of LDL} and \eqref{Eq:Formula for GF of LTL}, the top right one shows the backward error in the solution of $Ax=b$ as defined by \eqref{Eq:Formula for BE}, the bottom left one shows the $1$-norm of $L$, and the bottom right one shows the $1$-norm of inverse of $L$.}
\end{figure}

\begin{figure}[htbp]
\centering
\includegraphics[width=1.0\textwidth]{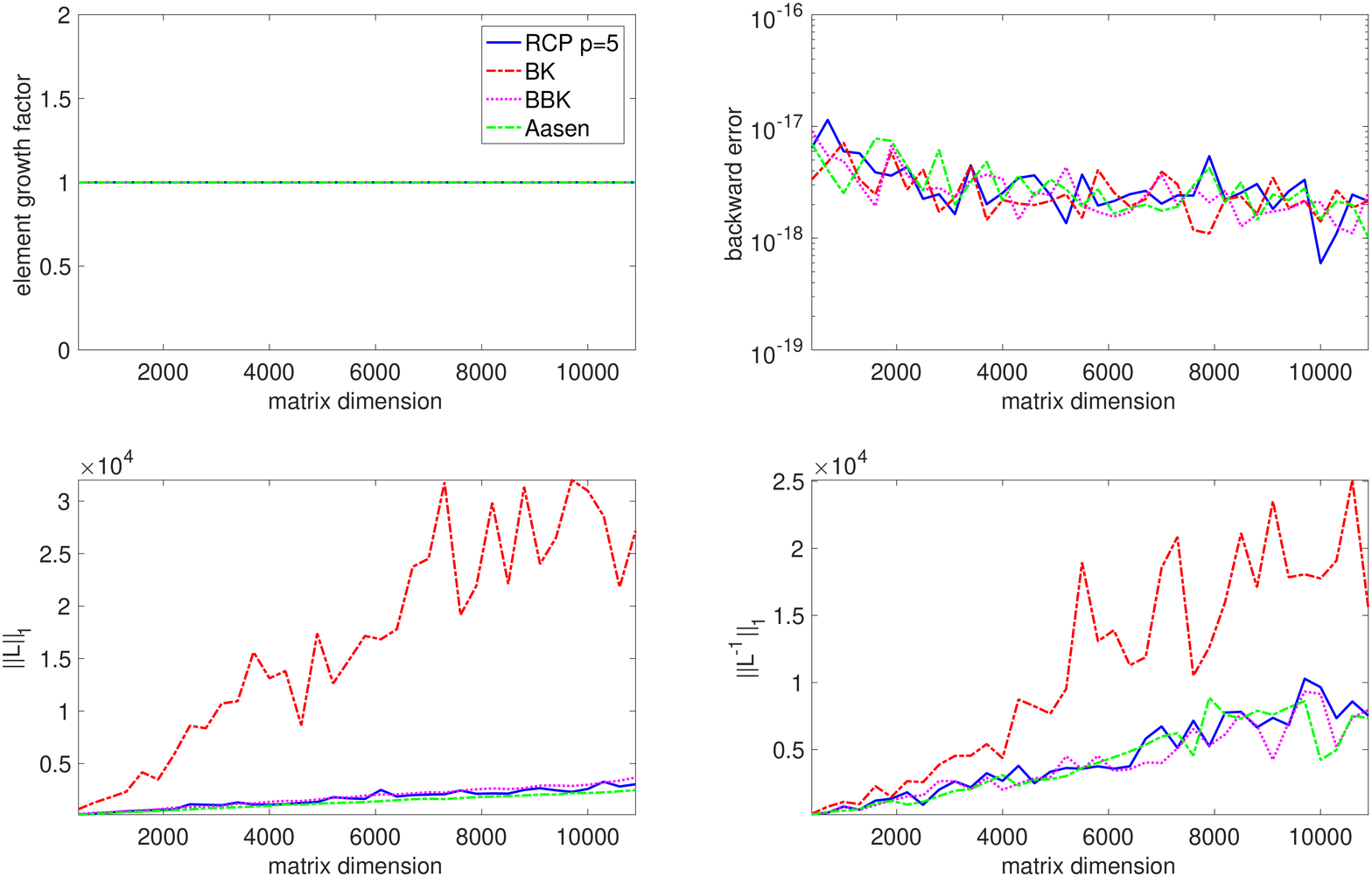}
\caption{Results for {\bf Type $10$} matrices. The top left plot shows the growth factor $\rho$ in the factorization of $A$ as defined by \eqref{Eq:Formula for GF of LDL} and \eqref{Eq:Formula for GF of LTL}, the top right one shows the backward error in the solution of $Ax=b$ as defined by \eqref{Eq:Formula for BE}, the bottom left one shows the $1$-norm of $L$, and the bottom right one shows the $1$-norm of inverse of $L$.}\label{Fig:Results for rank deficient matrices}
\end{figure}

\section{Conclusions}\label{Sec:Conclusions}
In this paper, we have introduced the randomized complete pivoting (RCP) algorithm for computing the $LDL^T$ factorization of a symmetric indefinite matrix. We developed a theoretical high-probability element growth factor upper bound for RCP, that is compatible to that of GECP. Moreover, we performed an error analysis to demonstrate the numerical stability of RCP. In our numerical experiments, RCP is as stable as Gaussian elimination with complete pivoting, yet only slightly slower than Bunch-Kaufman algorithm and Aasen's algorithm.

\bibliographystyle{siamplain}
\bibliography{references}
\end{document}